\newtheorem{thm}{Theorem}[section]
\newtheorem{prop}[thm]{Proposition}
\newtheorem{lemma}[thm]{Lemma}
\theoremstyle{definition}
\newtheorem*{defn}{Definition}
\newtheorem*{remark}{Remark}
\numberwithin{equation}{section}
\title{Symmetric seminorms and the Leibniz property}
\author{Zoltán Léka}
\address{Royal Holloway, University of London \\ Egham Hill \\ Egham \\ Surrey \\ TW20 0EX \\ United Kingdom}
\email{zoltan.leka@rhul.ac.uk}
\thanks{This study was supported by the Marie Curie IF Fellowship, Project 'Moments', Grant no. 653943 and by the Hungarian Scientific Research Fund (OTKA) grant
no. K104206.}
\subjclass[2010]{Primary 15A60, 46N30, 60E15 ; Secondary 26A51, 60A99.}
\keywords{standard deviation, central moments, Leibniz seminorm, symmetric norm, derivation}
\begin{document}

\begin{abstract}
  We show that certain symmetric seminorms on $\mathbb{R}^n$ satisfy the Leibniz inequality. 
  As an application, we obtain that $L^p$ norms of centered bounded real functions, defined on probability spaces, have the same property. 
  Even though this is well-known for the standard deviation it seems that the complete result has never been established.
  In addition, we shall connect the results with the differential calculus introduced by Cipriani and Sauvageot and Rieffel's non-commutative Riemann metric.
\end{abstract}

\maketitle

\section{Introduction}
 Let $(S, \mathcal{F}, \mu)$ denote a probability space and let $1 \leq p < \infty.$ The seminorm given by the $p$th absolute central moment
 of a random variable $f \colon S\rightarrow \mathbb{R}$ is
  $$ \sigma_p(f; \mu) = \|f - \mathbb{E}f \|_p =\left(\int_S \left| f - \int_S f \: d\mu \right|^p \: d\mu \right)^{1/p}.$$
 One of the most used quantity in probability theory and statistics is the standard deviation (when $p=2$). 
 Recently M.A. Rieffel observed that the standard deviation in ordinary and non-commutative probability spaces satisfies the 
 strong Leibniz inequality and even matricial seminorms have the same property \cite{R2}. To be precise, we say
   that a seminorm $L$ on a unital normed algebra $(\mathcal{A}, \|\cdot \|)$ is strongly Leibniz if (i) $L(1_\mathcal{A}) = 0,$ (ii) the Leibniz property
 $$ L(ab) \leq \|a\|L(b) + \|b\| L(a) $$ holds for every $a, b \in \mathcal{A}$ and, furthermore, 
 (iii) for every invertible $a,$ $ L(a^{-1}) \leq \|a^{-1}\|^2 L(a)$ follows.
 For an ordinary probability space $(S, \mathcal{F}, \mu),$ this means that for every $f$ and $g \in L^\infty(S, \mu),$
 we have the inequalities 
   $$ \|fg-\mathbb{E}(fg)\|_2 \leq \|g\|_{\infty}\|f-\mathbb{E}f\|_2 + \|f\|_{\infty}\|g-\mathbb{E}g\|_2 $$
 and  
    $$  \|f^{-1}-\mathbb{E}(f^{-1})\|_2 \leq \|f^{-1}\|_\infty^2 \|f-\mathbb{E}f\|_2 \quad \mbox{ if } f^{-1} \in  L^\infty(S, \mu).  $$
 The study of strongly Leibniz seminorms regarded as non-commutative metrics on quantum metric spaces
 was initiated by M. Rieffel in his seminal papers \cite{R0c}, \cite{R1}, \cite{RL}. They played a crucial role in the development 
 of a quantum theory for the Gromov--Hausdorff distance. A quantized version of this theory was established
 in the recent papers by Li and Kerr \cite{KL}, W. Wu \cite{Wu}, and a thorough survey is \cite{La}.
 
 The most natural sources of strongly Leibniz seminorms are normed first-order differential calculi. We recall now that a normed
 first order differential calculus is a couple $(\Omega, \partial),$ where $\Omega$ is a normed bimodule over $\mathcal{A}$
 such that  $$ \| a \omega b\| \leq \|a\| \|\omega\|_\Omega \|b\| \mbox{for all } a, b \in \mathcal{A} \mbox{ and } \omega \in \Omega,$$ 
 and $\partial \colon \mathcal{A} \rightarrow \Omega$
 is a derivation which satisfies the Leibniz rule $\partial(ab)  = \partial(a)b + a\partial(b).$ Readily, 
 $$ L(a) = \|\partial a\|_\Omega$$ is a (strongly) Leibniz seminorm on $\mathcal{A}$ (see \cite[Proposition 1.1]{R2}).
 
  A prototype of Leibniz seminorms is the Lipschitz number
  $$ L_\rho(f) = \sup \{|f(x) - f(y)| / \rho(x,y) \colon {x \neq y} \}$$ of complex-valued continuous functions
  defined on any compact metric space $(X,\rho)$ (\cite[Proposition 1.5.3]{NW2}). Interestingly, one can obtain $L_\rho$ by means of
  a normed first order differential calculus (\cite[Proposition 8]{NW}, \cite[Example 11.5]{R1}).
  We direct the interested reader to \cite{AC}, \cite{R0}, \cite{NW} and \cite{NW2}
  for a comprehensive study of general Lipschitz seminorms, Lip-norms, and the associated Lipschitz algebras.
  Although, we are unaware
  of any characterization of the Leibniz property \cite[Question 6.3]{R0}, the lattice inequality $L(f \vee g) \leq L(f) \vee L(g),$ for all real $f$ and $g,$ is sufficient
  to conclude that a Lip-norm $L$ is Leibniz (\cite[Theorem 8.1]{R0}).

  It is important to notice that any symmetric Dirichlet form $(D(\mathscr{E}), \mathscr{E} )$ defined on a dense domain $D(\mathscr{E})$ 
  of the real Hilbert space $L^2(S, \mu)$ satisfies the Leibniz inequality, see e.g. \cite[Theorem 1.4.2]{Fuk}, \cite[Corollary 3.3.2]{BH}.
  See \cite{K} and \cite{Y} for Dirichlet forms on finite sets, graphs and fractals.
  Furthermore, F. Cipriani and J.--L. Sauvageot \cite{CS} showed that every regular $C^*$-Dirichlet form can be
  represented as a quadratic form associated to a derivation taking its values in a Hilbert module, which is a direct link
  to the Leibniz rule.
  
  Back to the standard deviation, one can present a direct simple proof of its strong Leibniz property (see \cite{R1}, \cite{BeL}). 
  More interestingly, the (quantum) standard deviation completely fits into the aforementioned machinery of differential $1$-forms. 
  This idea was exploited independently from \cite{CS} in some depth in Rieffel's papers \cite{R1}, \cite{R2} and based on the observation that the variance of any random 
  variable $f \in L^2(S, \mu)$ can be written as $$ \|f - \mathbb{E}f\|_2^2 = {1 \over 2}  \iint_{S \times S} |f(x) - f(y)|^2 \: d\mu(x ) \: d\mu(y)$$ 
  (that is, it is a Dirichlet form on $L^2(S, \mu)).$
  His differential calculus is defined through the concept of spectral triples, introduced by Alain Connes, and what 
  he calls non-commutative Riemann metric, see \cite{AC} and \cite{R1}. Hence one can say that the standard deviation, commutative or not, shares a flavor of Connes' noncommutative geometry.

  The main goal of this paper is to show that the Leibniz inequality
   $$ \|fg - \mathbb{E}fg\|_p \leq \|g\|_\infty \|f - \mathbb{E}f\|_p + \|f\|_\infty \|g - \mathbb{E}g\|_p $$
  is satisfied for all $1 \leq p \neq 2 < \infty$ and real $f,g \in L^\infty(S, \mu ),$ which does not seem to have been noticed previously.
  We remark that the end-point case $p=\infty$ has already been settled in the recent paper \cite{BeL}.
  First, we prove the result for the finite state space $S_n = \{1, \hdots, n\}$ endowed with the uniform probability measure.
  To get a friendly approach to the subject, we shall replace the $\ell^p$ norms with symmetric norms on $\mathbb{R}^n.$
  It should be stressed here that the essential part of the paper works with symmetric norms and the uniform case. 
  In Section 3 we shall investigate the results in terms of different differential calculi, including the 
  Cipriani--Sauvageot algebraic construction of differential $1$-forms, and a very brief connection with Rieffel's 
  non-commutative Riemann metric. In Section 4, the failure of the normed bimodule property will lead us to a finite dimensional example of
  a Leibniz seminorm that is not strongly Leibniz. Lastly, in Section 5, we shall derive the Leibniz inequality
  for arbitrary probability measures by applying our earlier results on Leibniz seminorms in probability spaces \cite{BeL}.
  Our paper is in part an attempt to reveal a possible link between normed differential calculi and absolute central moments of bounded functions (random variables). Our future plan is to study the corresponding results in non-commutative matrix and $C^*$-algebras as well as 
  the case of complex-valued functions.

   \section{Leibniz inequality for symmetric seminorms}  
  
  At first, we collect a few notations we require in order to prove the main results. 
  
  \subsection{Symmetric norms}

  We say that a norm $\|\cdot\|$ on $\mathbb{R}^n$ is symmetric if it is invariant under sign-changes and
   permutations of the components. Symmetric norms are monotone which means that  
      $$    \|x\| \leq \|y\| \quad \mbox{ if } \quad  |x|^\downarrow  \leq |y|^\downarrow, $$
      where $|x|^\downarrow$ denotes the usual non-increasing rearrangement of the vector $|x|.$  
    Furthermore, the norm $\|\cdot\|$ is absolute so 
      $$ \|x\| = \||x|\|$$ for every $x \in \mathbb{R}^n$
     (see \cite[Section 2]{B}). 
  
   The vector $k$-norms (or Ky Fan $k$-norms) are special examples of symmetric norms. Indeed, the vector $k$-norm of $x$ is defined by  
  $$ \|x\|_{(k)} = \sum_{i=1}^k |x_i|^\downarrow.$$
 In the case when $k = n$ and $k = 1,$ we obtain the usual $\ell^1$ and $\ell^\infty$ norms on $\mathbb{R}^n$, 
 denoted by $\|\cdot\|_1$ and $\|\cdot\|_\infty$, respectively. We recall now that
 the dual norm of any symmetric norm is symmetric as well. This follows easily from the duality relation $\|x\|_* = \max \{ (x,y) \colon \|y\| \leq 1\}. $
    
A celebrated theorem of Ky Fan says that, for any $x, y \in \mathbb{R}_+^n,$ the inequalities $$\|x\|_{(k)} \leq \|y\|_{(k)}$$ hold
for every $1 \leq k \leq n,$ that is, $x$ is weakly majorized by $y,$ if and only if $$\|x\| \leq \|y\|$$ for every symmetric norm $\|\cdot\|$ on $\mathbb{R}^n$ (see \cite{B} or \cite[Chapter 15]{BS}). Hence one 
can look upon the vector $k$-norms as the cornerstones of symmetric norms.

 Following Barry Simon's terminology in \cite[p. 248]{BS}, let us introduce a class of real matrices.
 
 \begin{defn}
    We say that a matrix $A \in M_n(\mathbb{R})$ is real substochastic if 
     \begin{eqnarray*}
      \begin{split}
         \sum_{i=1}^n |a_{ij}| &\leq 1, \quad j = 1, \hdots, n, \\
         \sum_{j=1}^n |a_{ij}| &\leq 1, \quad i = 1, \hdots, n.
      \end{split}
     \end{eqnarray*}
 \end{defn}
It is simple to see that $A$ is real substochastic if and only if $A$  is a contraction on $\mathbb{R}^n$ endowed with the $\ell^1$ norm and the $\ell^\infty$ norm; 
i.e. $\|Ay\|_1 \leq \|y\|_1$ and $\|Ay\|_\infty \leq \|y\|_\infty$ for all $y \in \mathbb{R}^n.$
Additionally, if one can guarantee a proper linear connection between the vectors $x$ and $y,$ i.e. $Ay = x$ for some $A \in M_n(\mathbb{R}),$
we can use interpolation methods. Actually, the Calderón--Mityagin theorem (see \cite{C}, \cite{Mi} or \cite[Theorem 15.17]{BS}) says that
if $A$ is a real substochastic matrix then $$ \|Ay\| \leq \|y\|$$ follows for all $y \in \mathbb{R}^n$ and symmetric norms $\|\cdot\|.$

  Let $x = (x_1, \hdots, x_n) \in \mathbb{R}^n.$ Let us introduce the symmetric matrix 
  $\Theta_x$ with zero row 
   and column sum defined by 
   $$(\Theta_x)_{ij}    = \begin{cases}
                                  {1 \over 2n} (x_i + x_j) & \mbox{if } i \neq j \\
                               - \sum_{k : k \neq i } (\Theta_x)_{ik}  &  \mbox{ if } i  = j. 
                              \end{cases}
    $$
   Let us define the matrix $$I_x =  I_n + \Theta_x,$$ where $I_n$ denote the $n \times n$ identity matrix.
  Throughout the section we shall use the notation $\mathbb{E} f = {1 \over n} \sum_{i=1}^n f_i {\bf 1},$ 
  where $f = (f_1, \hdots, f_n) \in \mathbb{R}^n$ and $\mathbf{1}$ stands for the 
  constant $1$ vector. 
  Moreover, we shall consistently use $f$ and $g$ for vectors of $\mathbb{R}^n$ and $fg$ for their pointwise product.

  Our first proposition links the product of two vectors $f, g \in \mathbb{R}^n$ with the 
  matrices $I_{f+{\bf 1}},$ $I_{g+{\bf 1}}.$  
  
 \begin{prop} For any $f,$ $g \in \mathbb{R}^n,$ 
  $$ I_{f+ {\bf 1}}(g-\mathbb{E}g) + I_{g+ {\bf 1}}(f - \mathbb{E}f) =  \mathbb{E}(fg) - fg.$$
  \end{prop}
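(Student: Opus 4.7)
The identity is essentially a direct computation; the strategy is simply to expand both sides componentwise and observe a symmetry-driven cancellation. The one mildly delicate step is extracting a clean closed form for $\Theta_x y$ so that the substitutions $x = f+\mathbf{1},$ $y = g - \mathbb{E}g$ become manageable.

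First I would compute $\Theta_x y$ for an arbitrary pair of vectors. From the definition of $(\Theta_x)_{ii}$ the diagonal contribution turns the sum into a telescoping expression, giving
\begin{equation*}
(\Theta_x y)_i = \sum_{j=1}^n \tfrac{1}{2n}(x_i + x_j)(y_j - y_i),
\end{equation*}
which after distributing and regrouping yields
\begin{equation*}
(\Theta_x y)_i = \tfrac{1}{2}\bigl[x_i E(y) - x_i y_i + E(xy) - y_i E(x)\bigr],
\end{equation*}
where $E(\cdot)$ denotes the scalar $\frac{1}{n}\sum_k (\cdot)_k$. Adding $y_i$ produces $(I_x y)_i$.

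Next I would specialise by setting $x = f+\mathbf{1}$ and $y = g - \mathbb{E}g$. Two simplifications immediately occur: $E(y) = 0$, so the first term in the bracket drops, and $E(x) = E(f)+1$. A short expansion of $x_i y_i$ and $E(xy) = E(fg) - E(f)E(g)$ gives
\begin{equation*}
\bigl(I_{f+\mathbf{1}}(g - \mathbb{E}g)\bigr)_i = \tfrac{1}{2}\bigl[-f_i g_i + f_i E(g) + E(fg) - g_i E(f)\bigr]
\end{equation*}
after the $g_i$ and $E(g)$ contributions coming from $y_i$ cancel against those hidden in $x_i y_i$ and $y_i E(x)$.

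Finally, by the obvious $f \leftrightarrow g$ symmetry one reads off the analogous formula for $I_{g+\mathbf{1}}(f - \mathbb{E}f)$. Adding the two expressions, the cross-terms $\pm f_i E(g)$ and $\pm g_i E(f)$ cancel in pairs, leaving $-f_i g_i + E(fg)$, which is precisely the $i$th component of $\mathbb{E}(fg) - fg$. The main obstacle is purely bookkeeping: keeping track of which terms come from $y_i$ versus the bracket and using $E(g - \mathbb{E}g) = 0$ at the right moment to avoid a ballooning expression. Once $\Theta_x y$ is written in the closed form above, the rest is mechanical.
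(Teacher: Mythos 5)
Your proposal is correct: the closed form $(\Theta_x y)_i = \frac{1}{2}\bigl[x_i E(y) - x_i y_i + E(xy) - y_i E(x)\bigr]$ checks out, and the subsequent substitution and cancellation give exactly the claimed identity. This is essentially the same approach as the paper's proof, namely a direct componentwise expansion and regrouping (the paper merely organizes the bookkeeping by first shifting $f$ and $g$ by $\mathbf{1}$).
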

  
\begin{proof}
  Clearly, it is enough to show that 
  $$ I_{f}(g-\mathbb{E}g) + I_{g}(f - \mathbb{E}f) =  \mathbb{E}((f- {\bf 1})(g- {\bf 1})) - (f- {\bf 1})(g- {\bf 1})$$
holds. A straightforward calculation gives for every index $1 \leq m \leq n$ that
    \begin{eqnarray*}
    \begin{split}
    n(I_{f}(g-\mathbb{E}g) +& I_{g}(f - \mathbb{E}f))_m \\ 
     &= {1 \over 2n} \sum_{1 \leq i \neq m \leq n}  \sum_{1 \leq j \leq n} (f_i+f_m)(g_i - g_j) \\
     &  \quad + \Bigl(1 - {1 \over 2n } \sum_{1 \leq i \neq m \leq n} (f_m + f_i) \Bigr) \sum_{1 \leq j \leq n} (g_m - g_j)\\
    &   \quad + {1 \over 2n} \sum_{1 \leq i \neq m \leq n}  \sum_{1 \leq j \leq n} (g_i+g_m)(f_i - f_j) \\
    &  \quad + \Bigl(1 - {1 \over 2n } \sum_{1 \leq i \neq m \leq n} (g_m + g_i) \Bigr) \sum_{1 \leq j \leq n} (f_m - f_j)\\
    &= {1 \over 2n} \sum_{1 \leq i \neq m \leq n}  (f_i+f_m) \Biggl( \sum_{1 \leq j \leq n}(g_i - g_j) - \sum_{1 \leq j  \leq n} (g_m - g_j) \Biggr) \\
    &  \quad + {1 \over 2n} \sum_{1 \leq i \neq m \leq n}  (g_i+g_m) \Biggl( \sum_{1 \leq j \leq n}(f_i - f_j) - \sum_{1 \leq j  \leq n} (f_m - f_j) \Biggr) \\
    &  \quad + \sum_{1 \leq i \leq n} (g_m - g_i)  +   \sum_{1 \leq i \leq n} (f_m - f_i)\\
    &= {1 \over 2} \sum_{1 \leq i  \leq n}  ((f_i+f_m) (g_i - g_m) + (g_i+g_m) (f_i - f_m)) \\ 
    & \quad + \sum_{1 \leq i \leq n} (g_m - g_i + f_m - f_i)\\ 
    &= \sum_{1 \leq i  \leq n}  (f_ig_i - f_m g_m + g_m - g_i + f_m - f_i) \\
    &= \left(\sum_{1 \leq i  \leq n}  (f_i - 1)(g_i-1)\right) - n(f_m-1)(g_m-1)\\
       &=  n(\mathbb{E}((f- {\bf 1})(g - {\bf 1})) - (f - {\bf 1})(g - {\bf 1}))_m,
    \end{split}
   \end{eqnarray*}
  which is what we intended to have.  
\end{proof}

   Let us remember that 
 the dual norm of the vector $k$-norm is
  $$ \|x\|_{(k)^*} = \max \left(\|x\|_\infty, {\|x\|_1 \over k} \right) \qquad x \in \mathbb{R}^n$$ (e.g. \cite[Ex. IV.2.12]{B}).

   Let $\mathfrak{B}_{(k)^*} = \{ x \in \mathbb{R}^n \colon \|x\|_{(k)^*} \leq 1\}$ denote the closed unit ball of the dual space $(\mathbb{R}^n,\|\cdot\|_{(k)})^*.$
 Then the set of extreme points of $\mathfrak{B}_{(k)^*} $ can be readily described. The result is well-known, but we sketch a short proof for the sake of completeness. 

\begin{lemma}
    $$ {\rm ext }  \: \mathfrak{B}_{(k)^*} = \left\{ \sum_{{\substack{i \in S}}} \pm  e_i \colon  S \subseteq \{1, \hdots, n\} \mbox{ and } |S| = k  \right\},$$
   where $e_i$-s denote the standard basis elements of $\mathbb{R}^n.$   
\end{lemma}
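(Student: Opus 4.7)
The unit ball admits the polytope description $\mathfrak{B}_{(k)^*} = \{x \in \mathbb{R}^n : \|x\|_\infty \leq 1,\ \|x\|_1 \leq k\}$, so the plan is to classify its vertices directly. For the easy inclusion, each $v = \sum_{i \in S} \pm e_i$ with $|S| = k$ satisfies $\|v\|_\infty = 1$ and $\|v\|_1 = k$, and thus lies on the dual unit sphere. If $v = (u+w)/2$ with $u, w \in \mathfrak{B}_{(k)^*}$, then $\|u\|_\infty,\|w\|_\infty \leq 1$ combined with $v_i = \pm 1$ for $i \in S$ forces $u_i = w_i = v_i$ there; the $\ell^1$ bound $\|u\|_1 \leq k$, together with $\sum_{i \in S}|u_i| = k$, then forces $u_i = 0$ for $i \notin S$, and similarly for $w$. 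Hence $u = w = v$.

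For the converse, let $v$ be an extreme point. The first and main step is to show $v \in \{-1,0,1\}^n$. Suppose instead that $0 < |v_j| < 1$ for some $j$. If $\|v\|_1 < k$, the $\ell^1$ constraint is slack and the perturbations $v \pm \varepsilon e_j$ both remain in $\mathfrak{B}_{(k)^*}$ for small $\varepsilon$, contradicting extremality. So $\|v\|_1 = k$. The key observation is an integrality argument: the contribution to $\|v\|_1$ from coordinates with $|v_i| = 1$ is an integer, so the contribution from coordinates with $0 < |v_i| < 1$ must also be integer. Since a single such coordinate would contribute a value in $(0,1)$, there must exist a second index $j' \neq j$ with $0 < |v_{j'}| < 1$. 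Taking $\delta = \mathrm{sgn}(v_{j'})\, e_j - \mathrm{sgn}(v_j)\, e_{j'}$ and splitting into the two cases $\mathrm{sgn}(v_j) = \pm\mathrm{sgn}(v_{j'})$, a sign check confirms that both $v \pm \varepsilon \delta$ stay in $\mathfrak{B}_{(k)^*}$ for small $\varepsilon > 0$: one coordinate's magnitude grows by $\varepsilon$ while the other's shrinks by $\varepsilon$, preserving $\|\cdot\|_1 = k$, and both stay strictly below $1$ in absolute value. Their midpoint is $v$, yielding the desired contradiction.

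Once $v \in \{-1,0,1\}^n$ is established, set $S = \{i : v_i \neq 0\}$. Then $\|v\|_1 = |S| \leq k$, and if $|S| < k$, perturbing any zero coordinate by $\pm\varepsilon e_i$ yields two distinct ball vectors averaging to $v$, again contradicting extremality. So $|S| = k$ and $v = \sum_{i \in S} \pm e_i$. The step I expect to be the main obstacle is the integrality argument that forces a second interior coordinate when $\|v\|_1 = k$ is active; without it one cannot preserve $\|\cdot\|_1$ by a linear perturbation, and the lone-interior-coordinate configuration is precisely where naive coordinate-by-coordinate reasoning breaks down.
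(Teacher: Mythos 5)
Your proof is correct, and it takes a genuinely different route from the paper's. The paper argues by an iterative ``peeling'' construction: given $v\in\mathfrak{B}_{(k)^*}$, it subtracts off $|v_i|c$ for a suitable sign vector $c$ supported on a $k$-set (where $v_i$ has smallest non-zero modulus), writing $v$ as a convex combination of $c$ and a point with one fewer non-zero coordinate; iterating shows $\mathfrak{B}_{(k)^*}=\mathrm{conv}\,\mathfrak{K}_0$, which yields the inclusion $\mathrm{ext}\,\mathfrak{B}_{(k)^*}\subseteq\{\sum_{i\in S}\pm e_i:|S|=k\}$, with the reverse inclusion left implicit. You instead work directly from the polytope description $\{x:\|x\|_\infty\le 1,\ \|x\|_1\le k\}$ and classify vertices by perturbation: your integrality observation (that when $\|v\|_1=k$ is active, a single coordinate strictly between $0$ and $1$ in modulus is impossible, so a second one exists and an $\ell^1$-preserving two-coordinate perturbation applies) correctly handles the one configuration where a naive single-coordinate perturbation fails, and your easy direction is a complete verification that each $\sum_{i\in S}\pm e_i$ is extreme, which the paper's sketch does not spell out. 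What each approach buys: the paper's construction gives an explicit convex decomposition of an arbitrary ball element into the claimed extreme points (useful if one wants effective Carath\'eodory-type representations), while yours is more self-contained and actually proves both inclusions of the stated set equality.
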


\begin{proof}
  Denote $\mathfrak{K}_0$ the points of the $n$-cube $[-1,1]^n$ which has at most $k$ non-zero coordinates. It is not difficult to see that
   $$ \mbox{ conv } \mathfrak{K}_0 = \mathfrak{B}_{(k)^*}. $$
   In fact, pick a point $v$ in $\mathfrak{B}_{(k)^*}$ which has at most $k+1$ non-zero coordinates. Denote $v_i$ a coordinate
   of $v$ which has the smallest non-zero modulus. Obviously, $|v_i| \leq 1.$ Now choose a vector $c \in \{-1,0,1\}^n$ such that
   the support of $c$ has cardinality $k,$ $i \in $ supp $c$ and sign $c_j = \mbox{sign } v_j$ for every $j \in \mbox{ supp } c.$ Then
    it is simple to see that 
    $$ {v - |v_i|c \over 1 - |v|_i} \in \mathfrak{B}_{(k)^*}. $$
    
    Iterating the previous process, we arrive a point which has at most $k$ non-zero coordinates. 
    This point is the convex combination of vertices of a proper $k$-cube in $[-1,1]^n.$ 
\end{proof}

Now we are ready to prove the following proposition.
   
\begin{prop}
  For every $f \in [-1,1]^n$ and $1 \leq k \leq n,$ the operator 
  \begin{eqnarray*}
   \begin{split}
    I_{f+{\bf 1}}^*  \colon (\mathbb{R}^n,\|\cdot\|_{(k)^*}) &\rightarrow (\mathbb{R}^n,\|\cdot\|_{(k)^*})/\mathbb{R}, \quad 
                                                  x &\mapsto I_{f+ {\bf 1}}x + \mathbb{R}
   \end{split} 
  \end{eqnarray*}
 is a contraction.
  
\end{prop}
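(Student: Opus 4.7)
The plan is to verify the contraction property on the extreme points of $\mathfrak{B}_{(k)^*}$, which by the preceding lemma are exactly the signed indicators $v = \sum_{i \in S} \epsilon_i e_i$ with $|S| = k$ and $\epsilon_i \in \{\pm 1\}$; convexity of the norm then lifts a uniform bound from these extreme points to the whole unit ball.

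A short computation using $\Theta_{f + \mathbf{1}} = \Theta_f + \Theta_{\mathbf{1}}$ together with the identity $I_n + \Theta_{\mathbf{1}} = E$, where $E y := \mathbb{E} y$ is the rank-one averaging operator, gives $I_{f+\mathbf{1}} = E + \Theta_f$. Since $E v \in \mathbb{R}\mathbf{1}$ vanishes in the quotient, $I^*_{f+\mathbf{1}} v = [\Theta_f v]$; and because $\|\cdot\|_{(k)^*} = \max(\|\cdot\|_\infty, \|\cdot\|_1/k)$, the task reduces to producing, for each extreme $v$, a single constant $c \in \mathbb{R}$ satisfying
$$\|\Theta_f v - c \mathbf{1}\|_\infty \le 1 \quad \mbox{and} \quad \|\Theta_f v - c \mathbf{1}\|_1 \le k.$$

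Since $\Theta_f v$ is linear in $f$, the quantity $\inf_c \|\Theta_f v - c \mathbf{1}\|_{(k)^*}$ is a convex function of $f$ on the cube $[-1,1]^n$ and hence attains its maximum at a vertex $f \in \{\pm 1\}^n$. For such dichotomous $f$ I would introduce the partition cardinalities $k_\pm = \#\{i \in S : f_i = \pm 1\}$, $l_\pm = \#\{i \notin S : f_i = \pm 1\}$, and set $\sigma_T = \sum_{i \in S}\epsilon_i f_i$. Choosing $c = \sigma_T / n$, the explicit formula for $(\Theta_f v)_m$ splits its entries into finitely many rational values determined by these cardinalities; the $\ell^\infty$ bound is then immediate from the crude estimate $|f_i| \le 1$, while a triangle inequality on each block collapses the $\ell^1$-norm through the algebraic identity $(k_+ + k_-)(k_+ + k_- + l_+ + l_-) = k\, n$.

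The principal obstacle will be the combinatorial bookkeeping for non-canonical sign patterns of $v$. Sign-flipping a single $\epsilon_i$ does not correspond to any sign-flip of $f$, so the canonical case $\epsilon \equiv +1$ cannot be recovered by a straightforward symmetry argument; instead, the four sub-partitions of $S$ indexed by the joint signs of $(\epsilon_i, f_i)$ must be carried explicitly through the computation, in order for the key algebraic identity above to actually close and for a single constant $c$ to serve both the $\ell^\infty$ and the $\ell^1$ estimate simultaneously.
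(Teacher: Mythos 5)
Your proposal is correct and follows essentially the same route as the paper: reduction to the extreme points of $\mathfrak{B}_{(k)^*}$, reduction to vertices of the cube by linearity of $f\mapsto \Theta_f v$ and convexity of the quotient norm, the same centering constant (your $c=\sigma_T/n$ coincides with the paper's $r_v=\tfrac1n\langle f+\mathbf 1,v\rangle$ once the averaging part is absorbed into the quotient), and separate $\ell^\infty\le 1$ and $\ell^1\le k$ estimates. The only cosmetic difference is that the paper closes the $\ell^1$ estimate by the index swap $\sum_{i}\sum_{j\in S}|x_i-x_j|=\sum_{i\in S}\sum_{j}|x_i-x_j|$ (giving $\sum_i\|P_S L_x e_i\|_1=\sum_{i\in S}\|L_xe_i\|_1=|S|$) instead of your explicit cardinality bookkeeping; note also that the entrywise triangle inequality makes the signs $\epsilon_i$ drop out, so the four-fold partition you anticipate reduces to the two-fold one by the sign of $f_i$.
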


\begin{proof}
 First, to get an upper bound on the norm of $I_{f+{\bf 1}}^*,$ it is enough to calculate the norm of the class $I_{f+{\bf 1}}v$
 for every extreme point $v$ of the unit ball $(\mathbb{R}^n,\|\cdot\|_{(k)^*}).$ From Lemma 2.2, we can assume that 
 $$v = \sum_{i \in S_+} e_i - \sum_{i \in S_-} e_i$$ for some disjoint sets $S_+, S_- \subseteq \mathbb{Z}_n$ such that 
 $|S_-| + |S_+| = k.$ For any $x, y \in \mathbb{R}^n$ and  $0 \leq s \leq 1,$ 
 we have $I^*_{sx + (1-s)y} = sI^*_x + (1-s)I^*_y.$ Furthermore, since the quotient norm is convex, one has
   \begin{eqnarray*}
    \begin{split}
      \|I_{f+{\bf 1}}v\|_{(k)^*} &= \min_{\lambda \in \mathbb{R}} \|I_{f+{\bf 1}}v - \lambda {\bf 1}\|_{(k)^*} \\
      &\leq \max_{x \in [0,2]^n} \min_{\lambda \in \mathbb{R}} \|I_{x}v - \lambda {\bf 1}\|_{(k)^*}  \\
      &=  \max_{x \in \{0,2\}^n} \min_{\lambda \in \mathbb{R}}  \|I_{x}v - \lambda {\bf 1}\|_{(k)^*} .
    \end{split} 
   \end{eqnarray*}  
 
  Next, pick an $x \in \{0,2\}^n.$ Set $$r_v = {1 \over n} \langle x,v \rangle .  $$
   In order to prove that  $I_x v$ is in the unit ball of the quotient space, it is enough to show that
   $$     \left\|I_{x}v - r_v 1 \right\|_{(k)^*} \leq 1.$$
  In fact, 
       \begin{eqnarray*}
    \begin{split} 
      \|I_xv - r_v 1 \|_\infty &= \max_{1 \leq i \leq n} \left|\left\langle I_x e_i - n^{-1} x, v\right\rangle\right|   \\
      &\leq \max_{1 \leq i \leq n} \left\| (I_x - n^{-1} x \otimes 1)e_i \right\|_{(k)} \|v\|_{(k)^*} \\
      &\leq \max_{1 \leq i \leq n} \left\| (I_x - n^{-1} x \otimes 1)e_i \right\|_{1}. \\
 \end{split} 
   \end{eqnarray*} 
   Let $s = \mbox{card} \{ i : x_i = 2\}.$ For any $1 \leq i \leq n,$ note that 
        \begin{eqnarray*}
    \begin{split} 
      \left\| (I_x - n^{-1} x \otimes 1)e_i  \right\|_{1} &=  \Biggl|1 - {1 \over 2n}\sum_{j = 1}^n (x_i + x_j) \Biggr| + {1 \over 2n} \sum_{j=1}^n |x_i - x_j| \\
                                                       &= \begin{cases}
                                                              {s\over n} + {n-s\over n} & \mbox{ if } x_i = 2, \\
                                                             \left(1 - {s \over n}\right) + {s \over n} &\mbox{ if } x_i = 0 \\
                                                          \end{cases} \\
                                                       &= 1.   
 \end{split} 
   \end{eqnarray*}  
   Thus $$ \|I_xv - r_v 1 \|_\infty   \leq 1. $$ 
   
   Now, let $P_S$ denote the projection $\sum_{i=1}^n x_ie_i \mapsto \sum_{i \in S} x_ie_i$ on $\mathbb{R}^n,$ where
  $S = S_- \cup S_+$ is the support of $v.$ Then
   \begin{eqnarray*}
    \begin{split} 
      \|I_xv - r_v 1 \|_{1} =& \sum_{i=1}^n \left|\left\langle P_S \left(I_x e_i - {1 \over n }x \right), v\right\rangle\right|  \\     
              &\leq  \sum_{i=1}^n \left\| P_S \left(I_x e_i - {1 \over n }x \right) \right\|_{(k)} \|v\|_{(k)^*} \\
            &\leq  \sum_{i=1}^n \left\| P_S \left(I_x e_i - {1 \over n }x \right) \right\|_{1} \\
            &= \sum_{i \in S} \left( \left|1 - {1 \over 2n}\sum_{j = 1}^n (x_i + x_j) \right|  + {1 \over 2n} \sum_{j \in S} |x_i - x_j| \right) \\
            &\quad +  \sum_{i \not \in S} {1 \over 2n} \sum_{j \in S} |x_i - x_j| \\
            &=  \sum_{i \in S}  \left(\left|1 - {1 \over 2n}\sum_{j = 1}^n (x_i + x_j) \right|  + {1 \over 2n} \sum_{j =1}^n |x_i - x_j|\right),\\
 \end{split} 
   \end{eqnarray*}  
   that is, 
     \begin{eqnarray*}
    \begin{split} 
      \|I_xv - r_v 1 \|_{1} &\leq \sum_{i \in S} \left\| (I_x - n^{-1} x \otimes 1)e_i  \right\|_{1} \\
                                  &= |S|.
      \end{split} 
   \end{eqnarray*}
  Hence
     $$  \|I_xv - r_v 1 \|_{(k)^*} \leq 1,  $$
   and the proof is complete.  
\end{proof}

  Let us define the hyperplane 
  $$ \mathfrak{X}_0 := \{ x \in \mathbb{R}^n \colon \mathbb{E}x = \sum_{i=1}^n x_i = 0 \} \subseteq \mathbb{R}^n.$$ 
 Obviously, the dual of the Banach space $(\mathfrak{X}_0,  \|\cdot\|_{(k)})$ is the quotient space $(\mathbb{R}^n,  \|\cdot\|_{(k)^*})/\mathbb{R}.$
 In fact, $\mathfrak{X}_0$ is a one co-dimensional subspace of $\mathbb{R},$ whilst $\langle y, x-\mathbb{E}x \rangle = 0$ holds for every
 $y \in \mathbb{R} {\bf 1}.$ Clearly, $I_{f+{\bf 1}} {\bf 1} = {\bf 1}.$ 
 Hence the  adjoint of $I_{f+{\bf 1}} \colon (\mathfrak{X}_0, \|\cdot\|_{(k)}) \rightarrow (\mathbb{R}^n, \|\cdot\|_{(k)})$ is the operator
   $$I^*_{f+{\bf 1}} \; \colon \; (\mathbb{R}^n,\|\cdot\|_{(k)^*}) \rightarrow (\mathbb{R}^n,\|\cdot\|_{(k)^*})/\mathbb{R}, \quad  x \mapsto I_{f+{\bf 1}}x + \mathbb{R}, $$
   of Proposition 2.3.
  Since  $\|I_{f+{\bf 1}}|\mathfrak{X}_0\| = \|(I_{f+{\bf 1}}|\mathfrak{X}_0)^*\|$ (see e.g. \cite[Proposition 2.3.10]{Pe}), 
  a straightforward corollary is the following statement. 
  
\begin{prop} For every $f \in [-1,1]^n,$ the operator $I_{f+{\bf 1}}$ is a contraction on the normed space $(\mathfrak{X}_0,\|\cdot\|_{(k)}).$
\end{prop}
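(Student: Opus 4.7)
The plan is to deduce Proposition 2.4 from Proposition 2.3 by the standard duality argument sketched in the paragraph immediately preceding the statement. The substantive estimate is already contained in Proposition 2.3, so what remains is essentially bookkeeping.

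First I would check that $I_{f+\mathbf{1}}$ actually stabilises $\mathfrak{X}_0$. Since $\Theta_{f+\mathbf{1}}$ has all column sums equal to zero, $\mathbb{E}(\Theta_{f+\mathbf{1}}y) = 0$ for every $y \in \mathbb{R}^n$, and the identity operator preserves $\mathfrak{X}_0$, so $I_{f+\mathbf{1}} \mathfrak{X}_0 \subseteq \mathfrak{X}_0$. Thus it makes sense to speak of $I_{f+\mathbf{1}}$ as an operator on the normed space $(\mathfrak{X}_0, \|\cdot\|_{(k)})$.

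Next I would set up the duality. Because $\mathfrak{X}_0$ is precisely the annihilator of $\mathbb{R}\mathbf{1}$ in $\mathbb{R}^n$, the Banach space dual of $(\mathfrak{X}_0, \|\cdot\|_{(k)})$ is identified isometrically with the quotient $(\mathbb{R}^n, \|\cdot\|_{(k)^*})/\mathbb{R}\mathbf{1}$. Using the symmetry of the real matrix $I_{f+\mathbf{1}}$ one has $\langle I_{f+\mathbf{1}}y, x\rangle = \langle y, I_{f+\mathbf{1}}x\rangle$ for all $y \in \mathfrak{X}_0$ and $x \in \mathbb{R}^n$, which shows that the Banach-space adjoint of $I_{f+\mathbf{1}}\colon(\mathfrak{X}_0, \|\cdot\|_{(k)}) \to (\mathfrak{X}_0, \|\cdot\|_{(k)})$ is the map $x + \mathbb{R}\mathbf{1} \mapsto I_{f+\mathbf{1}}x + \mathbb{R}\mathbf{1}$ on the quotient space. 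Because $I_{f+\mathbf{1}}\mathbf{1} = \mathbf{1}$, the operator $I^*_{f+\mathbf{1}}$ of Proposition 2.3 factors through this induced map on the quotient without any change of norm (the quotient map $(\mathbb{R}^n,\|\cdot\|_{(k)^*}) \to (\mathbb{R}^n,\|\cdot\|_{(k)^*})/\mathbb{R}$ is a metric surjection), so Proposition 2.3 bounds the adjoint in operator norm by $1$.

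Finally, applying the identity $\|T\| = \|T^*\|$ for bounded operators between Banach spaces (cited in the excerpt as \cite[Proposition 2.3.10]{Pe}) closes the argument and yields $\|I_{f+\mathbf{1}}\| \leq 1$ as an endomorphism of $(\mathfrak{X}_0, \|\cdot\|_{(k)})$. The only point calling for care is verifying that the abstract Banach-space adjoint really coincides (after the canonical identifications) with the operator described in Proposition 2.3; once this identification is made, the conclusion is immediate, and there is no genuine technical obstacle.
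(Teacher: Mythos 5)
Your argument is correct and is essentially the paper's own: the paper likewise identifies the dual of $(\mathfrak{X}_0,\|\cdot\|_{(k)})$ with the quotient $(\mathbb{R}^n,\|\cdot\|_{(k)^*})/\mathbb{R}\mathbf{1}$, recognizes the operator of Proposition 2.3 as the adjoint of $I_{f+\mathbf{1}}$ (using $I_{f+\mathbf{1}}\mathbf{1}=\mathbf{1}$ and the symmetry of the matrix), and concludes via $\|T\|=\|T^*\|$. Your only additions are the explicit check that $I_{f+\mathbf{1}}$ preserves $\mathfrak{X}_0$ and the metric-surjection remark reconciling the map on $\mathbb{R}^n$ with the induced map on the quotient, both of which the paper leaves implicit.
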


 Furthermore, this leads us to the next proposition.
   
\begin{prop} For every symmetric $\|\cdot\|$ on $\mathbb{R}^n$ and $f \in [-1,1]^n$, $I_{f+{\bf 1}}$ is a contraction on $(\mathfrak{X}_0,\|\cdot\|).$
\end{prop}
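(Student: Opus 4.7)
The plan is to reduce the statement to the Ky Fan $k$-norm case (Proposition 2.4) via Ky Fan's majorization theorem recalled earlier in the section. First, one needs to verify that $I_{f+\mathbf{1}}$ does map $\mathfrak{X}_0$ into itself: the matrix $\Theta_{f+\mathbf{1}}$ has zero row sums by definition, and by symmetry zero column sums, so $\mathbf{1}^T \Theta_{f+\mathbf{1}} = 0$. Hence for any $x \in \mathfrak{X}_0$, $\mathbb{E}(I_{f+\mathbf{1}} x) = \mathbb{E}(x) + \mathbf{1}^T \Theta_{f+\mathbf{1}} x = 0$, so $y := I_{f+\mathbf{1}} x$ lies again in $\mathfrak{X}_0$.

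Next, by Proposition 2.4, applied for each $k = 1, \dots, n$, one has
\[
\|y\|_{(k)} \leq \|x\|_{(k)}.
\]
Since the Ky Fan $k$-norm depends only on the absolute values of the coordinates, this inequality can be rewritten as $\||y|\|_{(k)} \leq \||x|\|_{(k)}$ for every $1 \leq k \leq n$, i.e., the non-negative vector $|y|$ is weakly majorized by $|x|$. Ky Fan's majorization theorem, stated in Section 2.1, then yields $\||y|\| \leq \||x|\|$ for every symmetric norm $\|\cdot\|$ on $\mathbb{R}^n$. Invoking absoluteness of symmetric norms once more, one concludes $\|I_{f+\mathbf{1}} x\| = \|y\| \leq \|x\|$, which is the desired contractivity.

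The whole argument is essentially a bookkeeping step once Proposition 2.4 is in hand; no serious new difficulty arises. The only point where one must be attentive is the passage from signed vectors to their absolute values before applying Ky Fan's theorem, which is legitimate precisely because both the Ky Fan $k$-norms and arbitrary symmetric norms are absolute.
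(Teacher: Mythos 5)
Your proof is correct and takes essentially the same route as the paper's: apply Proposition 2.4 for each $k=1,\dots,n$, deduce that $|I_{f+\mathbf{1}}x|$ is weakly majorized by $|x|$, and conclude via Ky Fan's theorem and the absoluteness of symmetric norms. The only addition is your preliminary check that $I_{f+\mathbf{1}}$ preserves $\mathfrak{X}_0$, which is correct but not needed for the contraction inequality itself.
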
  
          
 \begin{proof}
    For every $x \in \mathfrak{X}_0$ and $1 \leq k \leq n,$ Proposition 2.4 says that 
       $$ \sum_{i=1}^k |I_{f+{\bf 1}}x|_i^\downarrow \leq \sum_{i=1}^k |x|_i^\downarrow.$$ 
    Thus the vector $|I_{f+{\bf 1}}x|$ is weakly majorized by $|x|.$ Now the absolute property of $\|\cdot\|$ and Ky Fan's theorem for symmetric norms give that   
     $$ \|I_{f+{\bf 1}}x\| = \||I_{f+{\bf 1}}x|\| \leq \||x|\| = \|x\|, $$
    which is what we intended to have. 
 \end{proof}
 
  Now one can readily prove the following Leibniz inequality for symmetric norms.  

\begin{thm}
  Let $\|\cdot\|$ be a symmetric norm on $\mathbb{R}^n.$ For every $f, g \in \mathbb{R}^n,$ we have 
    $$ \|fg-\mathbb{E}(fg)\| \leq \|g\|_{\infty}\|f-\mathbb{E}f\| + \|f\|_{\infty}\|g-\mathbb{E}g\|. $$
\end{thm}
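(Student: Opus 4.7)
The plan is to combine the algebraic identity of Proposition 2.1 with the contractivity result of Proposition 2.5, after a simple homogeneity normalization.

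First I would reduce to the case $\|f\|_\infty, \|g\|_\infty \leq 1$. If either $f$ or $g$ is zero the inequality is trivial, so I may assume both infinity norms are positive and set $\tilde f = f / \|f\|_\infty$ and $\tilde g = g / \|g\|_\infty$, which both lie in $[-1,1]^n$. Each of the three terms in the desired inequality is homogeneous of degree one in $f$ and of degree one in $g$ (on the left because $\mathbb{E}$ is linear, on the right because the expectation commutes with the scaling), so proving
$$\|\tilde f\tilde g-\mathbb{E}(\tilde f\tilde g)\|\;\leq\;\|\tilde f-\mathbb{E}\tilde f\|+\|\tilde g-\mathbb{E}\tilde g\|$$
for $\tilde f,\tilde g\in[-1,1]^n$ is equivalent to the general statement.

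Next I would apply Proposition 2.1 to $\tilde f$ and $\tilde g$ to obtain
$$\mathbb{E}(\tilde f\tilde g)-\tilde f\tilde g \;=\; I_{\tilde f+\mathbf{1}}(\tilde g-\mathbb{E}\tilde g)+I_{\tilde g+\mathbf{1}}(\tilde f-\mathbb{E}\tilde f).$$
The key observation is that the vectors $\tilde f-\mathbb{E}\tilde f$ and $\tilde g-\mathbb{E}\tilde g$ both belong to the hyperplane $\mathfrak{X}_0$, precisely the subspace on which Proposition 2.5 provides contractivity of $I_{\tilde f+\mathbf{1}}$ and $I_{\tilde g+\mathbf{1}}$. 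Taking the symmetric norm of both sides, using the triangle inequality, and invoking Proposition 2.5 twice yields
$$\|\mathbb{E}(\tilde f\tilde g)-\tilde f\tilde g\| \;\leq\; \|\tilde g-\mathbb{E}\tilde g\|+\|\tilde f-\mathbb{E}\tilde f\|.$$
Multiplying through by $\|f\|_\infty\|g\|_\infty$ and undoing the normalization then recovers the stated Leibniz inequality.

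There is no real obstacle here, since all the technical work has already been done: the only task is to verify that the right-hand side of the identity in Proposition 2.1 consists of vectors of mean zero (so that Proposition 2.5 applies), and to track the homogeneity correctly under scaling. The fact that contractivity on $\mathfrak{X}_0$ requires the argument to be $\{f\in[-1,1]^n\}$ is exactly what forces the infinity-norm factors on the right-hand side.
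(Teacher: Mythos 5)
Your argument is correct and is essentially identical to the paper's own proof: normalize so that $\|f\|_\infty=\|g\|_\infty=1$, apply the identity of Proposition 2.1, observe that $f-\mathbb{E}f$ and $g-\mathbb{E}g$ lie in $\mathfrak{X}_0$, and conclude with the triangle inequality and the contractivity from Proposition 2.5. No gaps.
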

 
\begin{proof}
   Without loss of generality, we can assume that $\|f\|_\infty = \|g\|_\infty = 1.$ 
   Applying Proposition 2.1 and Proposition 2.5, it follows that  
    \begin{eqnarray*}
    \begin{split}
    \|fg - \mathbb{E}(fg)\| &= \|I_{f+{\bf 1}}(g-\mathbb{E}g) + I_{g+{\bf 1}} (f-\mathbb{E}f)\|  \\
                    &\leq \|I_{f+{\bf 1}}|\mathfrak{X}_0\| \|g - \mathbb{E}g\| + \|I_{g+{\bf 1}}|\mathfrak{X}_0\| \|f - \mathbb{E}f\| \\
     &= \|g - \mathbb{E}g\| + \|f - \mathbb{E}f\|,
   \end{split} 
   \end{eqnarray*}
   and the proof is complete.
\end{proof}

 \begin{remark} One can give a direct proof of Proposition 2.5 via the Calderón--Mityagin interpolation result as we briefly indicate. For an $x \in [0,2]^n,$ let us consider the matrix
 $$ L_x = I_x - { 1  \over n} x \otimes {\bf 1}.$$
 
 We note that the off-diagonal part of $L_x$ is skew-symmetric: $(L_x)_{i,j} = - (L_x)_{j,i}$ for every $i \neq j,$
 hence $\|L_x^T\|_{1 \rightarrow 1} = \|L_x^T\|_{\infty \rightarrow \infty}.$
 From the proof of Proposition 2.3, it follows that 
 $$ \|L_x^T\|_{1 \rightarrow 1} \leq 1 \quad \mbox{ and } \quad \|L_x^T\|_{\infty \rightarrow \infty} \leq 1.$$
 Moreover, for any symmetric norm $\|\cdot\|,$ the adjoint of $I_x \colon (\mathfrak{X}_0, \|\cdot\|) \rightarrow (\mathbb{R}^n, \|\cdot\|), \: v \mapsto I_{x}v,$ is the operator
   $$I_x^* \colon (\mathbb{R}^n, \|\cdot\|_*) \rightarrow (\mathbb{R}^n, \|\cdot\|_*) / \mathbb{R},$$ where 
    $$ I_{x}^*v = I_{x}v + \lambda {\bf 1}$$
   and $\|\cdot\|
_*$ denotes the dual norm. Again, for any $v \in \mathbb{R}^n$, let $\displaystyle r_v = {1 \over n}\langle x, v\rangle.$ Then
   \begin{eqnarray*}
    \begin{split}
      \|I_{x}v  - r_v 1\|_* &=  \|I_{x}v  - {1 \over n}\langle x, v\rangle \|_*  \\
                                  &=  \|\langle  (I_{x} - {1 \over n} x \otimes 1)e_i, v\rangle_i \|_* \\
                                  &= \|L_{x}^T v\|_*.
    \end{split}
   \end{eqnarray*} 
   Since the dual norm $\|\cdot\|_*$ is symmetric, the Calderón--Mityagin theorem says that 
    $$ \min_{\lambda \in \mathbb{R} } \|I_{x}v  - \lambda {\bf 1} \|_* \leq \|L_{x}^T v\|_* \leq \|v\|_*.$$
   That is, 
    $$ \|I_{x}^*\| \leq 1,$$
   and the operator $I_{x}$  is a contraction on $(\mathfrak{X}_0, \|\cdot\|).$ 
 \end{remark}
 
 \begin{remark}  Perhaps it is appropriate to note that if $x  \in [0,1]^n$ then $I_x$ is doubly stochastic.
  Hence, the Birkhoff--von Neumann theorem gives that $\|I_x\|_{\|\cdot\| \rightarrow \|\cdot\|} \leq 1$ for any permutation invariant norm $\|\cdot\|$ on $\mathbb{R}^n.$
  Now assume that $f, g$ are nonnegative and $\|f\|_\infty = \|g\|_\infty = 1$  Then
    $$ I_{-f+{\bf 1}}( \mathbb{E}g - g) + I_{-g+{\bf 1}}( \mathbb{E}f -f) =  \mathbb{E}(fg) - fg,$$
  and the matrices $I_{-f+{\bf 1}},$ $I_{-g+{\bf 1}}$ are doubly stochastic as well. A simple corollary is the following statement.
\begin{thm}
  Let $\| \cdot\|$ be a permutation invariant norm on $\mathbb{R}^n.$  
  For any nonnegative vectors $f$ and $g$ in $\mathbb{R}_+^n,$  we have
 $$ \|fg-\mathbb{E}(fg)\|\leq \|g\|_{\infty}\|f-\mathbb{E}f\| + \|f\|_{\infty}\|g-\mathbb{E}g\|. $$
\end{thm}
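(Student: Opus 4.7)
The plan is to adapt the proof of Theorem 2.6 by replacing the Calderón--Mityagin step (which critically used absoluteness of the norm) with the Birkhoff--von Neumann theorem, as already anticipated in the preceding remark. The key point is that for vectors $x$ in the positive cube $[0,1]^n$, the matrix $I_x$ is not merely a contraction on $\mathfrak{X}_0$ but is already doubly stochastic on all of $\mathbb{R}^n$, so permutation invariance alone (without sign-invariance) suffices to get contractivity.

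After normalizing so that $\|f\|_\infty = \|g\|_\infty = 1$, the vectors $\mathbf{1}-f$ and $\mathbf{1}-g$ both lie in $[0,1]^n$ because $f,g$ are nonnegative. Substituting $-f,-g$ into Proposition 2.1 (or applying it directly to the pair $\mathbf{1}-f, \mathbf{1}-g$ and simplifying) yields the sign-reflected identity
$$ I_{-f+\mathbf{1}}(\mathbb{E}g - g) + I_{-g+\mathbf{1}}(\mathbb{E}f - f) = \mathbb{E}(fg) - fg,$$
already recorded in the remark.

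Next I would verify that for any $x \in [0,1]^n$ the matrix $I_x$ is doubly stochastic. The off-diagonal entries $(x_i+x_j)/(2n)$ are manifestly nonnegative; the diagonal entries $1 - \sum_{k\neq i}(x_i+x_k)/(2n)$ are bounded below by $1-(n-1)/n \geq 0$; the identity $I_x\mathbf{1} = \mathbf{1}$ gives unit row sums; and the symmetry of $\Theta_x$ then gives unit column sums. By the Birkhoff--von Neumann theorem, $I_x$ is a convex combination of permutation matrices, so for any permutation invariant norm $\|\cdot\|$ the triangle inequality delivers $\|I_x v\| \leq \|v\|$ for every $v \in \mathbb{R}^n$.

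Applying these contraction bounds to the displayed identity and using the triangle inequality gives, under the normalization,
$$ \|fg - \mathbb{E}(fg)\| \leq \|I_{-f+\mathbf{1}}(\mathbb{E}g - g)\| + \|I_{-g+\mathbf{1}}(\mathbb{E}f - f)\| \leq \|g - \mathbb{E}g\| + \|f - \mathbb{E}f\|,$$
which rescales to the claimed Leibniz inequality. No serious obstacle appears here; the remark essentially lays out the strategy, and the only minor care needed is to verify nonnegativity of the diagonal entries of $I_x$ on $[0,1]^n$ and to track the sign change in passing from Proposition 2.1 to the version used above.
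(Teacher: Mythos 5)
Your proposal is correct and follows exactly the route the paper itself sketches in the remark preceding the theorem: normalize, pass to the sign-reflected identity $I_{-f+\mathbf{1}}(\mathbb{E}g-g)+I_{-g+\mathbf{1}}(\mathbb{E}f-f)=\mathbb{E}(fg)-fg$, observe that $I_x$ is doubly stochastic for $x\in[0,1]^n$, and invoke Birkhoff--von Neumann together with the triangle inequality. You have merely filled in the routine verifications (nonnegativity of the diagonal entries, row and column sums) that the paper leaves implicit, so there is nothing to correct.
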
  
 \end{remark}

 \section{Derivations and the Leibniz inequality} 

  To have a description of the Leibniz inequality in terms of derivations, we shall need to introduce the  
  fundamental concepts of Laplacians and related Dirichlet forms on finite sets \cite{K}.
  
  \subsection{Laplacians and Dirichlet forms}
  We recall that a Laplacian matrix $\Delta$ is a non-positive definite matrix 
  such that its kernel is the subspace $\mathbb{R} {\bf 1}$ and all of its off-diagonals are non-negative.
  Let us remember that every Laplacian $\Delta$ determines a Dirichlet form $\mathscr{E}_\Delta(u,v) = - \langle u, \Delta v \rangle $ 
  on $\mathbb{R}^n \times \mathbb{R}^n.$ To be precise, for any $f \in \mathbb{R}^n$ let us define the vector
  $$ \overline{f}_i = \begin{cases}
                        0  &\mbox{ if }  f_i \leq 0, \\
                        f_i &\mbox{ if } 0 <  f_i < 1,  \\
                        1 &\mbox{ if } 1 \leq  f_i.
                        \end{cases} $$
   A symmetric bilinear form $\mathscr{E}$ is a Dirichlet form if it satisfies the following properties:
  \begin{itemize}
   \item[(i)] $\mathscr{E}(f,f) \geq 0,$
   \item[(ii)] $\mathscr{E}(f,f) = 0$ if and only if $f \in \mathbb{R} {\bf 1},$
   \item[(iii)] $\mathscr{E}( \overline{f},  \overline{f}) \leq \mathscr{E}(f,f)$  {(Markovian property)}.
  \end{itemize}

  Actually, there is a one-to-one correspondence between the Laplacians and the Dirichlet forms on finite sets,
  see \cite[Proposition 2.1.3]{K}.

  On the other hand, it is simple to see that
  \begin{eqnarray*} 
   \begin{split}
   |f_i g_i - f_j g_j| &= |f_i g_i - f_j g_i + f_j g_i - f_j g_j| \\
                       &\leq \|g\|_\infty |f_i - f_j| + \|f\|_\infty |g_i - g_j|,
   \end{split}
  \end{eqnarray*}
  hence the Leibniz inequality
   $${\mathscr{E}^{1/2}(fg,fg )}\leq \|g\|_\infty {\mathscr{E}^{1/2}(f,f)} + \|f\|_\infty {\mathscr{E}^{1/2}(g,g)}$$
  follows immediately (see \cite[p. 281]{Y}). 
 
  Now let $f \in \mathbb{R}^n$ and $P$ denote the orthogonal projection $f \mapsto {1 \over n} \sum_{i=1}^n f_i \mathbf{1}$ 
  with respect to the usual inner product on $\mathbb{R}^n.$ Then the operator
  $$ \Delta_u =  P-I_n = {1 \over n} \begin{pmatrix} 1-n & 1 & \hdots &  1 \cr 
                                      1  & 1-n&  &       1 \cr
                                    \vdots&  & \ddots & \vdots \cr
                                       1 &  \hdots &  1 & 1-n \end{pmatrix}$$ 
  is a Laplacian and 
           $$ \mathscr{E}_{\Delta_u}(f,f) = \|f - \mathbb{E}f\|_2^2,$$
  where $\|\cdot\|_2$ denotes the usual $\ell^2_n$-norm.
 
 
  \subsection{Derivations and the Leibniz inequality}
 
  Let $\mu$ be in general a probability measure on the set $S_n.$ 
  Then the variance of any random vector $f$ can be written as the Dirichlet form 
  $$ \sigma_2^2(f; \mu) = - \langle f, \Delta_\mu f \rangle = {1 \over 2} \sum_{x , y \in S_n} (f(x) - f(y))^2 \mu(x)\mu(y),$$
  where the off-diagonal part of the Laplacian $\Delta_\mu$ is $(\Delta_\mu)_{i,j} = \mu(i) \mu(j),$ $1 \leq i \neq j \leq n.$
  Then the deviation  $\displaystyle \sigma_2(f; \mu)$ can be represented as 
  the $L^2$-norm of the gradient vector $\partial_u f,$ where $\partial_u$ is the universal
  derivation $\partial_u f = f \otimes 1 - 1 \otimes f,$ in the Hilbert space 
  $L^2(S_n \times S_n , \mu \otimes \mu).$
      
  To obtain $\sigma_p$ as a norm of a derivation we need a refined approach. Let us consider the matrix algebra $M_n(\mathbb{R}) = \ell_n^\infty \otimes \ell_n^\infty$ endowed with 
  the Hilbert--Schmidt inner product as a bimodule over the finite dimensional algebra $\ell_n^\infty,$
  where the left and right actions are defined by linearity from 
   $$ a (b \otimes c) d = ab \otimes cd.$$
  Define the derivation $\partial \colon \ell_n^\infty \rightarrow M_n(\mathbb{R})$ by 
    \begin{eqnarray}
      \partial f = {1 \over \sqrt{2n}} (f \otimes 1 - 1 \otimes f),
     \end{eqnarray}
    which satisfies the Leibniz equality, i.e. $\partial( fg) = \partial f \cdot g + f \cdot \partial g.$
  The adjoint operator $\partial^*,$ defined by $ {\rm Tr}(A^T \partial f)= \langle \partial^* A, f \rangle$ for any $A \in M_n(\mathbb{R}),$ is the operator
   \begin{eqnarray}
      (\partial^* A)_i = -{1 \over \sqrt{2n} } (A(1 \otimes 1) - (1 \otimes 1)A)_{ii}.
   \end{eqnarray}
  Indeed, let  $\iota$  denote the canonical embedding of the algebra $\ell_n^{\infty}$ into $M_n(\mathbb{R})$ as the
  diagonal algebra.
  Then one has that $$\partial f  = {1 \over \sqrt{2n}}( (\iota f ) 1 \otimes 1 - 1 \otimes 1 (\iota f)). $$ 
  Since the extended derivation
    $d \colon A \mapsto A(1 \otimes 1) - (1 \otimes 1)A$ is a skew adjoint map on $M_n(\mathbb{R})$ with respect to the Hilbert--Schmidt inner product, we get  $\partial^* = -\iota^* d.$ 
 
     An elementary calculation implies the following lemma, whence we omit its proof. 
     
     \begin{lemma} One has the decomposition 
                           $$ -\Delta_u = \partial^* \partial.$$ 
     \end{lemma}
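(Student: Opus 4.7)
My plan is to prove the operator identity $-\Delta_u = \partial^*\partial$ by comparing the two quadratic forms they induce, rather than by manipulating the commutator formula for $\partial^*$ given in (3.2). Since both operators are self-adjoint on $\mathbb{R}^n$ with its usual inner product, it suffices to show that
\[
\langle \partial f, \partial g\rangle_{HS} \;=\; -\langle f, \Delta_u g\rangle
\]
for every $f,g \in \mathbb{R}^n$, because the left-hand side equals $\langle f,\partial^*\partial g\rangle$ by definition of the adjoint.

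The right-hand side is already essentially tabulated in the preceding subsection. Writing $\Delta_u = P - I_n$ where $P f = (\mathbb{E}f)\mathbf{1}$ is the orthogonal projection onto $\mathbb{R}\mathbf{1}$, and identifying $\mathbb{E}f$ with the scalar $\bar f = \tfrac1n\sum_i f_i$, one finds
\[
-\langle f, \Delta_u g\rangle \;=\; \langle f, (I_n - P)g\rangle \;=\; \langle f,g\rangle - n\bar f\bar g
\;=\; \langle f-\mathbb{E}f,\, g-\mathbb{E}g\rangle,
\]
consistent with $\mathscr{E}_{\Delta_u}(f,f) = \sigma_2^2(f;u) = \|f-\mathbb{E}f\|_2^2$ already recorded in the excerpt.

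For the left-hand side I would compute directly from the formula (3.1). Since $(\partial f)_{ij} = \tfrac{1}{\sqrt{2n}}(f_i - f_j)$, the Hilbert--Schmidt inner product is
\[
\langle \partial f,\partial g\rangle_{HS} \;=\; \frac{1}{2n}\sum_{i,j=1}^n (f_i - f_j)(g_i - g_j).
\]
Expanding the product and using $\sum_i f_i = n\bar f$, $\sum_j g_j = n\bar g$ yields
$\sum_{i,j}(f_i-f_j)(g_i-g_j) = 2n\langle f,g\rangle - 2n^2\bar f\bar g$, and therefore $\langle \partial f,\partial g\rangle_{HS} = \langle f,g\rangle - n\bar f\bar g$. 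This matches the right-hand side, completing the proof.

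There is no real obstacle here; the entire argument is a short bookkeeping exercise and in fact the key identity $\sum_{i,j}(f_i-f_j)^2 = 2n\|f-\mathbb{E}f\|_2^2$ is just the polarization of the variance formula displayed in the introduction. I prefer this route over invoking (3.2) directly because the adjoint formula in (3.2) and the sign in $\partial^* = -\iota^* d$ require verifying self-adjointness of the commutator $A \mapsto AJ - JA$ (with $J = \mathbf{1}\otimes\mathbf{1}$) on the Hilbert--Schmidt space, and the quadratic-form route sidesteps any sign check.
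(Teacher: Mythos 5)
Your argument is correct and complete. Note that the paper itself offers no proof here --- it merely states that ``an elementary calculation'' gives the lemma, with the formula (3.2) for $\partial^*$ derived immediately beforehand, which suggests the intended route is to plug (3.2) into $\partial^*\partial f$ and simplify. You instead verify the identity of bilinear forms
$\langle \partial f,\partial g\rangle_{HS}=\langle f,(I_n-P)g\rangle$
for all $f,g$, which pins down the operator identity without ever touching the commutator formula or its sign. Both computations reduce to the same expansion $\sum_{i,j}(f_i-f_j)(g_i-g_j)=2n\langle f,g\rangle-2n^2\bar f\bar g$; your version has the small advantage of making the link to the variance formula $\|f-\mathbb{E}f\|_2^2=\tfrac{1}{2n}\sum_{i,j}(f_i-f_j)^2$ from the introduction explicit, and of not requiring a separate check that $\partial^*$ as given in (3.2) really is the Hilbert--Schmidt adjoint. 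The one cosmetic remark: you do not actually need self-adjointness of either operator, since agreement of $\langle f,Ag\rangle$ and $\langle f,Bg\rangle$ for all $f,g$ already forces $A=B$; invoking it is harmless but superfluous.
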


 
  Then the following definition is quite natural. 
  
  \begin{defn}
    Fix a symmetric norm $\|\cdot\|$ on $\mathbb{R}^n$ and let $\|\cdot\|_*$ denote its dual norm. We define a seminorm on the matrix algebra $M_n(\mathbb{R})$ by
   $$ \|A\|_\partial = \max \: \{ \mbox{ Tr}(A^T \partial f) \colon \|f\|_* \leq 1 \}.$$   
  \end{defn}

  The next proposition links the differential calculus $(M_n(\mathbb{R}), \partial)$ over $\ell^{\infty}_n$
  with the norms of centered vectors.
 
 \begin{prop} 
    Let $f=(f_1, \hdots, f_n) \in \ell_n^\infty$ and $\|\cdot\|$ be a symmetric norm on $\mathbb{R}^n.$ Then the equality
     $$ \Bigl\|f - {1 \over n} \sum_{i=1}^n f_i {\bf 1} \Bigr\| = \|\partial f \|_{\partial}$$ holds. 
  \end{prop}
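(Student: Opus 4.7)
The plan is to unravel the definition of $\|\partial f\|_{\partial}$ by duality and then collapse the resulting composition $\partial^{*}\partial$ using Lemma 3.1.

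First, after renaming the dummy variable in the definition of $\|\cdot\|_{\partial}$ as $g$, I would write
$$\|\partial f\|_{\partial} = \max\{\operatorname{Tr}((\partial f)^T \partial g) : \|g\|_{*} \leq 1\}.$$
Interpreting the trace as the Hilbert--Schmidt inner product and moving $\partial$ to the other side through its adjoint (introduced just after (3.1)) gives
$$\operatorname{Tr}((\partial f)^T \partial g) = \langle \partial f, \partial g\rangle_{\mathrm{HS}} = \langle \partial^{*}\partial f, g\rangle,$$
so that $\|\partial f\|_{\partial} = \max\{\langle \partial^{*}\partial f, g\rangle : \|g\|_{*} \leq 1\}$.

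Next I would invoke Lemma 3.1, according to which $\partial^{*}\partial = -\Delta_u = I_n - P$, where $P$ is the orthogonal projection onto $\mathbb{R}\mathbf{1}$. Hence $\partial^{*}\partial f = f - \mathbb{E}f$, and the previous identity becomes
$$\|\partial f\|_{\partial} = \max\{\langle f - \mathbb{E}f, g\rangle : \|g\|_{*} \leq 1\}.$$

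To conclude, since $\mathbb{R}^n$ is finite-dimensional the bidual of any norm coincides with the norm itself, so the right-hand side is precisely $\|f - \mathbb{E}f\|$. There is essentially no genuine obstacle in the argument: all substantive content has been packaged into Lemma 3.1 (which is stated in the preceding subsection), while the rest is routine duality bookkeeping. The only minor formal point is that the supremum is actually attained and therefore a maximum, which is immediate because the dual unit ball in $\mathbb{R}^n$ is compact.
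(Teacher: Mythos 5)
Your proposal is correct and is essentially the paper's own proof run in the opposite direction: the paper starts from $\|\Delta_u f\|=\|\partial^*\partial f\|$, expands it by duality over the dual unit ball, and passes through the adjoint relation to reach $\|\partial f\|_\partial$, which is exactly your chain of equalities reversed. The only ingredients in both are Lemma 3.1, the defining adjoint identity $\operatorname{Tr}((\partial f)^T\partial g)=\langle\partial^*\partial f,g\rangle$, and finite-dimensional norm duality, so there is nothing further to add.
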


   \begin{proof}
    From Lemma 3.1 and duality
   \begin{eqnarray*}
     \begin{split}
       \|\Delta_u f\| = \|\partial^*\partial f\| =& \max \left\{ \langle \partial^*\partial  f,g \rangle \colon \|g\|_* \leq 1 \right\} \\
                      =& \max \: \{ \mbox{Tr}(\partial f^T \partial g) \:  \colon \|g\|_* \leq 1 \} \\
                      =&  \|\partial f \|_{\partial}.
     \end{split}
   \end{eqnarray*}    
   \end{proof}
  
  The next theorem shows a certain module property of the seminorm $\|\cdot\|_\partial.$
   
   \begin{thm}
    For any $f$ and $g \in \mathbb{R}^n,$
       $$ \|\partial f \cdot g\|_\partial \leq \|g\|_\infty \|\partial f\|_\partial,$$    
     \vspace{.1cm}  
       $$ \|g \cdot \partial f\|_\partial \leq \|g\|_\infty \|\partial f\|_\partial. $$
   \end{thm}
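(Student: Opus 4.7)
The plan is to exploit the identification $\|A\|_\partial = \|\partial^* A\|$, which follows directly from the definition of $\|\cdot\|_\partial$, the adjoint relation $\operatorname{Tr}(A^T\partial h) = \langle \partial^* A, h\rangle$, and the duality between $\|\cdot\|$ and $\|\cdot\|_*$. Combined with Proposition 3.3 (which gives $\|\partial f\|_\partial = \|f - \mathbb{E}f\|$), the theorem reduces to the two vector inequalities
\begin{equation*}
\|\partial^*(\partial f \cdot g)\| \le \|g\|_\infty \|f - \mathbb{E}f\|, \qquad \|\partial^*(g \cdot \partial f)\| \le \|g\|_\infty \|f - \mathbb{E}f\|.
\end{equation*}

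The key step is to show that, up to a common sign,
\begin{equation*}
\partial^*(\partial f \cdot g) \;=\; \partial^*(g \cdot \partial f) \;=\; \pm\, I_{g+\mathbf{1}}(f - \mathbb{E}f).
\end{equation*}
To get this, I would unfold the bimodule actions as $\partial f \cdot g = \frac{1}{\sqrt{2n}}(f\otimes g - 1 \otimes fg)$ and $g \cdot \partial f = \frac{1}{\sqrt{2n}}(gf \otimes 1 - g \otimes f)$, and then apply formula (3.2) after reading off the row and column sums of each rank-one summand (for instance, the $i$th row of $f\otimes g$ sums to $n f_i \mathbb{E}g$, while its $i$th column sums to $n\mathbb{E}f \cdot g_i$). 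Collecting the resulting terms yields the explicit pointwise expression
\begin{equation*}
\pm\,\tfrac12\bigl[f_i g_i + f_i\mathbb{E}g - g_i \mathbb{E}f - \mathbb{E}(fg)\bigr]
\end{equation*}
for both $(\partial^*(\partial f \cdot g))_i$ and $(\partial^*(g \cdot \partial f))_i$. A parallel direct expansion of $(I_{g+\mathbf{1}}(f - \mathbb{E}f))_i = (f - \mathbb{E}f)_i + (\Theta_{g+\mathbf{1}}(f - \mathbb{E}f))_i$, using the definition of $\Theta_{g+\mathbf{1}}$ and the vanishing of $\sum_j (f-\mathbb{E}f)_j$ to telescope the cross sums, collapses into the same scalar expression up to sign.

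Once this identity is in hand the rest is immediate: by homogeneity one may assume $\|g\|_\infty = 1$, so that $g \in [-1,1]^n$ and Proposition 2.5 guarantees that $I_{g+\mathbf{1}}$ is a contraction on $(\mathfrak{X}_0, \|\cdot\|)$. Since $f - \mathbb{E}f \in \mathfrak{X}_0$, we conclude
\begin{equation*}
\|\partial f \cdot g\|_\partial = \|I_{g+\mathbf{1}}(f - \mathbb{E}f)\| \le \|f - \mathbb{E}f\| = \|\partial f\|_\partial,
\end{equation*}
and rescaling by $\|g\|_\infty$ yields the claimed inequality. The argument for $g \cdot \partial f$ is verbatim.

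The principal obstacle I anticipate is the bookkeeping needed to verify the pointwise identities and reconcile the two formulas; every other ingredient is already packaged in Section 2 and Lemma 3.1. As an alternative route that avoids reproducing the computation in full, one can apply $\partial^*$ to the Leibniz identity $\partial(fg) = \partial f \cdot g + f \cdot \partial g$ and combine Lemma 3.1 with Proposition 2.1 to obtain $\partial^*(\partial f \cdot g) + \partial^*(f \cdot \partial g) = -I_{g+\mathbf{1}}(f - \mathbb{E}f) - I_{f+\mathbf{1}}(g - \mathbb{E}g)$; however, a separate argument is then required to split this sum into its individual summands, so this shortcut does not really save work.
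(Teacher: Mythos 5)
Your proposal is correct and follows essentially the same route as the paper: both identify $\|g\cdot\partial f\|_\partial$ with $\|\Theta_g f\| = \|I_{g+\mathbf{1}}(f-\mathbb{E}f)\|$ (the paper via the pairing $\langle h,\Theta_g f\rangle = \mathrm{Tr}((g\cdot\partial f)^T\partial h)$, you via the equivalent adjoint formula $(3.2)$) and then conclude by Proposition 2.5 together with $f-\mathbb{E}f\in\mathfrak{X}_0$ and homogeneity in $g$. Your observation that $\partial^*(\partial f\cdot g)=\partial^*(g\cdot\partial f)$, so that one computation handles both inequalities, is a small economy over the paper's ``the same argument gives the other case,'' but the substance is identical.
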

  
   \begin{proof}
    First, we have 
    \begin{eqnarray*}
     \begin{split}
        (\partial f \cdot g)_{ij} = g_{j} (\partial f)_{ij} \quad \mbox{ and } \quad (g \cdot \partial f)_{ij} =  g_i (\partial f)_{ij} .
     \end{split}
   \end{eqnarray*} 
   For any $a \in \mathbb{R}^n,$ note that $I_{g} + {a \otimes \bf 1} = I_{g}$ holds on the subspace $\mathfrak{X}_0 = (I_n-P)\mathbb{R}^n.$ Thus
    $$  I_{g + \bf 1} = I_{g + \bf 1} - {{\bf 1} \over n} \otimes 1 = \Theta_g \quad \mbox{ on } \quad \mathfrak{X}_0.$$ 
   From Proposition 2.5
     $$ \|\Theta_gh\| \leq \|g\|_\infty\|h\| \quad \mbox{for any }h \in \mathfrak{X}_0 . $$
   On the other hand, a direct calculation shows that 
   \begin{eqnarray*}
     \begin{split}
       \langle h, \Theta_gf \rangle &=  {1 \over 2n} \sum_{i,j = 1}^n (f_i - f_j)(g_i+g_j)h_i = {1 \over 2n}   \sum_{i,j = 1}^n  (f_i - f_j) g_i (h_i - h_j) \\
                          &= \mbox{Tr}((g \cdot\partial f)^T \partial h) .                     
     \end{split}
    \end{eqnarray*}
   Thus
    \begin{eqnarray*}
     \begin{split}
     \|g \cdot \partial f\|_\partial &=  \max \{ \langle h, \Theta_gf \rangle \colon \|h\|_* \leq 1 \}\\
                                    &=  \|\Theta_gf\| \\
                                    &=  \|\Theta_g((I-P)f \oplus Pf)\|  \\
                                    &=  \|\Theta_g(I-P)f\| \\
                                    &\leq \|g\|_\infty \|(I-P)f\| \\
                                    &= \|g\|_\infty \|\partial f\|_\partial.
    \end{split}
   \end{eqnarray*}    
    The same argument gives that $\|\partial f \cdot g\|_{\partial} \leq \|g\|_\infty \|\partial f\|_{\partial}$ holds, hence
    the proof is complete. 
   \end{proof}

  We saw in Theorem 2.6 that the Leibniz inequality holds with symmetric norms. 
  Now one can provide a transparent reformulation of the proof relying upon the previous results. 
  
  \begin{proof}[Proof of Theorem 2.6]
   \begin{eqnarray*}
    \begin{split}
     \left\|fg - {1 \over n} \sum_{i=1}^n f_ig_i {\bf 1} \right\| = \|\partial^* \partial (fg)\| &= \|\partial^* (\partial f \cdot g) + \partial^* (f \cdot \partial g\| \\
                       &\leq \| \partial f \cdot g\|_\partial + \|f \cdot \partial g\|_\partial \\
                       &\leq  \|g\|_\infty \|\partial f\|_\partial + \|f\|_\infty \| \partial g\|_\partial \\
                       &=  \|g\|_\infty \|f - {1 \over n} \sum_{i=1}^n f_i {\bf 1} \| + \|f\|_\infty \|g - {1 \over n} \sum_{i=1}^n g_i {\bf 1} \|.                       
    \end{split}
   \end{eqnarray*}
  \end{proof} 
   
  At this point one can ask if the inequality 
       \begin{equation}
         \|f (\partial  g)  h\|_\partial \leq \|f\|_\infty  \|h\|_\infty \|\partial g\|_\partial    
       \end{equation}
  holds for every $f,g$ and $h \in \mathbb{R}^n$ and a normed 
  bimodule structure might appear on a certain subspace of $M_n(\mathbb{R}).$
  Then the strong Leibniz inequality would be an immediate corollary of (3.3) due to the derivation rule 
  $\partial f^{-1} = - f^{-1} (\partial f) f^{-1}.$ Unfortunately, this is rarely the case as we can now see. 
  Let us consider the seminorm $\|\cdot\|_{1,\partial} \equiv \|\partial^* \cdot\|_{1} $ on $M_n(\mathbb{R}).$ 
  Then, with choice of the vectors $f = h =(1,-1,1,1,1)$ and $g = (1,-1,0,0,0)$ in $\mathbb{R}^5,$ one has
    $$\| \partial g\|_{1,\partial} < \|f (\partial g) h\|_{1,\partial}.$$ 
  However, in the case of $\|\cdot\|_{2,\partial}$ seminorm one can apply the differential calculus invented by Cipriani and Sauvageot \cite{CS} in order to prove (3.3) (see Proposition (3.6) below). 
  
  Interestingly, our 
  numerical experiences support the conjecture that all the above seminorms in Theorem 2.6 are strongly Leibniz but we shall leave open this question. 
  
  The crucial point in the previous proof of Theorem 2.6 and in the failure of (3.3) is the decomposition $-\Delta_u = \partial^* \partial$ 
  which depends heavily on the Hilbert--Schmidt inner product. 
  Another decomposition would emerge from the aforementioned Hilbert bimodule structure on $M_n(\mathbb{R})$ used by Cipriani and Sauvageot.
  This is the content of the next section.

  \subsubsection{Cipriani--Sauvageot differential calculus} Here we shall briefly 
  describe the Hilbert bimodule structure introduced in \cite{CS} (the interested reader might see \cite{JLS} as well). 
  The motivation there was to prove that any regular $C^*$-Dirichlet form can be represented as a
  quadratic form associated to a closable derivation.
  To have a natural connection with the previous sections of the paper, we shall only describe 
  their algebraic construction in the real finite dimensional case. 
  
   Let us consider the left and right actions of $\ell^\infty_n$ 
   on $\ell^\infty_n \otimes \ell^\infty_n = M_n(\mathbb{R})$ by linearity from 
    \begin{eqnarray*}
     \begin{split}
         a(b \otimes c) = ab & \otimes c - a \otimes bc \\ 
         (b \otimes c)d &= b \otimes cd .  
     \end{split} 
    \end{eqnarray*}
     
   Let $\mathscr{E}$ be a Dirichlet form  on the set $S_n = \{1, \hdots, n\}.$
   Then a positive bilinear form on  $M_n(\mathbb{R})$ is given by 
   $$ (c \otimes d, a \otimes b )_{\mathcal{H}} 
   = {1 \over 2} (\mathscr{E}(c, abd) + \mathscr{E}(cdb, a)- \mathscr{E}(db, ca) ).$$
   The Hilbert space $\mathcal{H}$ is obtained by taking the factor space by the zero-norm subspace and then the completion.
   As a result, we get $\mathcal{H}$ is a Hilbert bimodule over $\ell^\infty_n$ \cite[Theorem 3.7]{CS}. Furthermore, the map 
   $\partial_0 \colon \ell^\infty_n \rightarrow \mathcal{H}$ defined by $$ \partial_0 f = f \otimes 1$$ 
   is a derivation on $\ell^\infty_n.$ Indeed, one can easily see that the Leibniz equality 
   $\partial_0(fg) = \partial_0 f \cdot g + f\cdot \partial_0 g$ is satisfied.
   Interestingly, from \cite[Theorem 4.7]{CS} one has the equality  
   \begin{equation}
    \mathscr{E}(f,f) = \|\partial_0 f\|_{\mathcal{H}}^2.    
   \end{equation}
   We know that
   that there is a one-to-one correspondence between Dirichlet forms on finite sets and Laplace matrices,
   hence every Laplace matrix $\Delta$ can be decomposed as 
   \begin{equation}
    - \Delta = \partial^*_0\partial_0,    
   \end{equation}
   where $\partial^*_0$ is the adjoint given by the formula
   $(\partial_0 f, a \otimes b)_{\mathcal{H}} = \langle f, \partial_0^* (a \otimes b) \rangle$ (see \cite[Theorem 8.2]{CS} for the general case).
      
   Now let us consider the Dirichlet form $\mathscr{E}_{\Delta_u}$ determined by $-\Delta_u = I-P.$
   First, let us calculate $\partial_0^*$ with respect to the inner product defined by $\mathscr{E}_{\Delta_u}.$
   
   \begin{lemma}
      The adjoint of the operator $\partial_0 \colon \ell^\infty_n \rightarrow \mathcal{H}$ is the linear map
        $$(\partial_0^* (a \otimes b))_i = {1 \over 2n} \sum_{j=1}^n (a_i - a_j) (b_i + b_j), \qquad 1 \leq i \leq n. $$
   \end{lemma}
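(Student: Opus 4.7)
The plan is to compute $(\partial_0 f, a\otimes b)_{\mathcal{H}}$ directly from the defining quadratic form on $\mathcal{H}$ and then read off $\partial_0^*(a\otimes b)$ from the pairing $\langle f, \partial_0^*(a\otimes b)\rangle$, since $f$ will be arbitrary.

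First I would substitute $c = f$, $d = 1$ into the formula for the $\mathcal{H}$-inner product, obtaining
$$(f\otimes 1, a\otimes b)_{\mathcal{H}} = \tfrac{1}{2}\bigl(\mathscr{E}_{\Delta_u}(f, ab) + \mathscr{E}_{\Delta_u}(fb, a) - \mathscr{E}_{\Delta_u}(b, af)\bigr).$$
Next I would rewrite each term using the carré-du-champ expression
$$\mathscr{E}_{\Delta_u}(u,v) = \tfrac{1}{2n}\sum_{i,j}(u_i-u_j)(v_i-v_j),$$
which follows immediately from $-\Delta_u = I_n - P$ together with a short expansion (the $-\tfrac{1}{n}(\sum_i u_i)(\sum_j v_j)$ piece coming out of the projection $P$ equals the cross-terms in the double sum).

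The main algebraic step is then to verify the pointwise identity
$$(f_i-f_j)(a_ib_i - a_jb_j) + (f_ib_i-f_jb_j)(a_i-a_j) - (b_i-b_j)(a_if_i-a_jf_j) = (f_i - f_j)(a_i - a_j)(b_i + b_j),$$
which condenses the three contributions into a single symmetric-in-$b$ form. This is the only place that requires real work; I expect it to be the main obstacle, since nine of the twelve monomials in the expansion must cancel and one has to track the signs coming from the minus sign on the last term. I would handle it by expanding each of the three products, grouping the twelve resulting monomials by the pattern of subscripts, and checking that all monomials of type $f_i a_j b_i$ and $f_j a_i b_j$ cancel, leaving precisely the six needed to reconstruct $(f_i-f_j)(a_i-a_j)(b_i+b_j)$.

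Once that identity is in hand,
$$(f\otimes 1, a\otimes b)_{\mathcal{H}} = \tfrac{1}{4n}\sum_{i,j}(f_i-f_j)(a_i-a_j)(b_i+b_j).$$
I would then symmetrize in $i \leftrightarrow j$: the factor $(a_i-a_j)(b_i+b_j)$ is antisymmetric in $i,j$, so $\sum_{i,j}f_j(a_i-a_j)(b_i+b_j) = -\sum_{i,j}f_i(a_i-a_j)(b_i+b_j)$, giving
$$(f\otimes 1, a\otimes b)_{\mathcal{H}} = \tfrac{1}{2n}\sum_i f_i \sum_j (a_i-a_j)(b_i+b_j).$$
Comparing with $\langle f, \partial_0^*(a\otimes b)\rangle$ and using that $f \in \ell_n^\infty$ is arbitrary yields the claimed formula for $\partial_0^*(a\otimes b)$, finishing the proof.
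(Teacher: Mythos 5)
Your proposal is correct and follows essentially the same route as the paper: both evaluate $(\partial_0 f, a\otimes b)_{\mathcal{H}}$ from the defining formula with $d=1$, expand the three Dirichlet forms as double sums over $i,j$, and read off $\partial_0^*$ from the coefficient of $f_i$; the paper merely uses the asymmetric expansion $\mathscr{E}_{\Delta_u}(u,v)=\tfrac{1}{n}\sum_{i,j}u_i(v_i-v_j)$ in place of your symmetric one, which lets it skip your final antisymmetrization step. One small correction to your bookkeeping: the displayed three-term identity is true, but the monomials $f_ia_jb_i$ and $f_ja_ib_j$ do \emph{not} cancel (they occur with coefficient $-1$ in $(f_i-f_j)(a_i-a_j)(b_i+b_j)$, which has eight monomials, not six); what actually happens is that the three occurrences of $f_ia_ib_i$, and likewise of $f_ja_jb_j$, combine with signs $+,+,-$ to leave one copy of each, while the six mixed monomials all survive.
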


    \begin{proof}
      A little computation shows that 
      \begin{eqnarray*}
        \begin{split}
          \langle \partial_0^* (a \otimes b),  c \rangle &= (a \otimes b, c \otimes 1)_{\mathcal{H}} \\ 
                                           &= \mathscr{E}_{\Delta_u} (c, ab) + \mathscr{E}_{\Delta_u} (a, bc) - \mathscr{E}_{\Delta_u} (b, ac)  \\
                                           &= {1 \over 2n} \sum_{i,j =1}^n (a_ib_i(c_i-c_j) + b_ic_i(a_i-a_j) - a_ic_i(b_i - b_j)) \\
                                           &= {1 \over 2n} \sum_{i=1}^n c_i \left( \sum_{j=1}^n (a_ib_i - b_ia_j + a_ib_j - a_jb_j) \right)\\
        \end{split}
      \end{eqnarray*}
     hence the proof is complete.
    \end{proof}

    Notice that we have $\partial_0^* (a \otimes b) = \Theta_b a$ with the notations of Section 2. 
    
   \begin{lemma}
      For any $f,g$ and $h \in \ell^\infty_n,$ 
      $$(\partial_0^* (f (\partial_0 g) h))_i = {1 \over 2n} \sum_{j=1}^n (g_i - g_j) (f_ih_j + f_jh_i).$$ 
   \end{lemma}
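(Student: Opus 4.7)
The plan is to reduce the computation to Lemma 3.5 by first expressing $f(\partial_0 g)h$ as a sum of two simple tensors and then invoking linearity of $\partial_0^*$. First I would use the left and right bimodule rules to compute
$$f(\partial_0 g)h = f(g\otimes 1)h = (fg\otimes 1 - f\otimes g)h = fg\otimes h - f\otimes gh,$$
where the first equality uses $a(b\otimes c)=ab\otimes c-a\otimes bc$ applied to $a=f$, $b=g$, $c=1$, and the second uses $(b\otimes c)d=b\otimes cd$ termwise. This is the step where the derivation-like left action produces the characteristic difference of two elementary tensors that will eventually yield the factor $(g_i-g_j)$.

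Next I would apply Lemma 3.5 to each tensor $fg\otimes h$ and $f\otimes gh$ separately. This gives
$$(\partial_0^*(fg\otimes h))_i = \frac{1}{2n}\sum_{j=1}^n (f_ig_i - f_jg_j)(h_i+h_j),$$
$$(\partial_0^*(f\otimes gh))_i = \frac{1}{2n}\sum_{j=1}^n (f_i-f_j)(g_ih_i + g_jh_j).$$
By linearity, $(\partial_0^*(f(\partial_0 g)h))_i$ is the difference of these two expressions.

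Finally, I would expand both products in the summand and collect terms. The diagonal pieces $f_ig_ih_i$ and $f_jg_jh_j$ cancel, leaving exactly the four cross-terms
$$f_ig_ih_j - f_jg_jh_i - f_ig_jh_j + f_jg_ih_i,$$
which factor as $(g_i-g_j)(f_ih_j + f_jh_i)$. Dividing by $2n$ and summing over $j$ yields the claimed formula. I do not expect any real obstacle: the content of the lemma is essentially that the two bimodule moves above commute correctly with the adjoint formula of Lemma 3.5, so the entire argument is a one-line application of linearity followed by the cancellation just described.
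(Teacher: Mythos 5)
Your proof is correct and follows essentially the same route as the paper: both expand $f(\partial_0 g)h = fg\otimes h - f\otimes gh$ via the bimodule actions and then apply the adjoint formula for simple tensors together with linearity of $\partial_0^*$, and your final cancellation $(f_ig_i-f_jg_j)(h_i+h_j)-(f_i-f_j)(g_ih_i+g_jh_j)=(g_i-g_j)(f_ih_j+f_jh_i)$ checks out. Note only that the adjoint formula you invoke is Lemma 3.4 (the statement you are proving is Lemma 3.5), so the reference should be corrected.
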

   
   \begin{proof}
     From Lemma 3.4 notice that 
       \begin{eqnarray*}
        \begin{split}
       \partial_0^* (f(\partial_0 g) h) = \partial_0^*(fg \otimes h - f \otimes gh)  &= \partial_0^* (f_ih_j(g_i - g_j))_{i,j} \\
       &= \left({1 \over 2n} \sum_{j=1}^n (f_ih_j + f_jh_i)(g_i-g_j)\right)_{1 \leq i \leq n},
        \end{split}
        \end{eqnarray*}
     which is what we intended to have.  
   \end{proof}
 
  The previous lemmas show that the adjoint operators $\partial_0^*$ and $(2n)^{-1/2}\partial^*$ in (3.2) are the same on the subspace of matrices with zero diagonal and
   \begin{equation}
      \partial_0^* (f (\partial_0 g) h) = \partial^* (f (\partial g) h)   
   \end{equation}
    (the actions on both sides depend on the derivations $\partial$ in (3.1) and $\partial_0$). This means that no matter the
   decomposition $-\Delta_u = \partial^* \partial$ or $-\Delta_u = \partial_0^* \partial_0$ is taken, we need to overcome exactly the same inequalities
   to obtain the (strong) Leibniz property.
 
  Now we can prove the following bimodule property of the norm $\|\cdot\|_{2, \partial}$ of the previous section.

  \begin{prop}
    For any $f,g$ and $h \in \ell^\infty_n,$
     $$ \|\partial^* (f (\partial g) h)\|_{2} \leq \|f\|_\infty \|g\|_\infty \|\partial^*\partial g \|_{2}.$$
  \end{prop}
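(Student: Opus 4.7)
The plan is to transfer the claim from the naive Hilbert--Schmidt bimodule structure on $M_n(\mathbb{R})$ (which, as the remarks preceding the statement emphasize, does \emph{not} support a bimodule estimate in general) to the Cipriani--Sauvageot Hilbert bimodule $\mathcal{H}$ associated with $\mathscr{E}_{\Delta_u}$, where the genuine bimodule estimate $\|a \omega b\|_{\mathcal{H}} \leq \|a\|_\infty \|b\|_\infty \|\omega\|_{\mathcal{H}}$ holds by construction \cite[Theorem 3.7]{CS}. The bridge is the identity (3.6), which replaces $\partial^*(f(\partial g) h)$ with $\partial_0^*(f(\partial_0 g) h)$; after that only a one-sided dominance $\|\partial_0^* \omega\|_2 \leq \|\omega\|_{\mathcal{H}}$ separates us from the desired conclusion.

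I would first establish that auxiliary bound. Since $\mathscr{E}_{\Delta_u}(\mathbf{1},\mathbf{1})=0$, the class of $\mathbf{1}\otimes\mathbf{1}=\partial_0 \mathbf{1}$ vanishes in $\mathcal{H}$, so $\partial_0^*\omega$ automatically lies in the zero-mean subspace $\mathfrak{X}_0$. Combining this with the defining identity (3.4), namely $\|\partial_0 h\|_{\mathcal{H}}^2 = \mathscr{E}_{\Delta_u}(h,h) = \|h-\mathbb{E}h\|_2^2$, a direct duality calculation gives
$$\|\partial_0^*\omega\|_2 = \sup_{\|h\|_2\leq 1}\langle \partial_0^*\omega,\,h\rangle = \sup_{\|h\|_2\leq 1}(\omega,\,\partial_0 h)_{\mathcal{H}} \leq \sup_{\|h\|_2\leq 1}\|\omega\|_{\mathcal{H}}\,\|h-\mathbb{E}h\|_2 \leq \|\omega\|_{\mathcal{H}}.$$

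With this in hand, setting $\omega = f(\partial_0 g) h$ and chaining the estimates yields
\begin{eqnarray*}
\begin{split}
\|\partial^*(f(\partial g) h)\|_2 &= \|\partial_0^*(f(\partial_0 g) h)\|_2 \qquad \text{by (3.6)} \\
&\leq \|f(\partial_0 g) h\|_{\mathcal{H}} \qquad \text{by the auxiliary bound} \\
&\leq \|f\|_\infty \|h\|_\infty \|\partial_0 g\|_{\mathcal{H}} \qquad \text{by the $\mathcal{H}$-bimodule property} \\
&= \|f\|_\infty \|h\|_\infty \|g-\mathbb{E}g\|_2 \qquad \text{by (3.4)} \\
&= \|f\|_\infty \|h\|_\infty \|\partial^*\partial g\|_2,
\end{split}
\end{eqnarray*}
where the last line uses $\partial^*\partial g = -\Delta_u g = g-\mathbb{E}g$.

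The delicate step is the duality inequality $\|\partial_0^*\omega\|_2 \leq \|\omega\|_{\mathcal{H}}$; the remainder is bookkeeping once one accepts that $\mathcal{H}$ was engineered precisely to satisfy the bimodule axiom. This also explains the asymmetry the paper highlights: the analogous bound for $\|\cdot\|_{p,\partial}$ with $p\neq 2$ is not forced by any such Hilbert-space machinery, and indeed the explicit counterexample for $p=1$ shows it can genuinely fail, whereas $p=2$ is rescued by the Cipriani--Sauvageot construction.
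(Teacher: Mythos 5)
Your argument is correct and matches the paper's proof step for step: transfer to the Cipriani--Sauvageot module via (3.6), establish $\|\partial_0^*\omega\|_2 \leq \|\omega\|_{\mathcal{H}}$, and apply the bimodule inequality of \cite[Theorem 3.7]{CS}; the only difference is that you prove the auxiliary bound by duality against $\ell^2$ unit vectors, whereas the paper applies Cauchy--Schwarz to $(A,\partial_0\partial_0^*A)_{\mathcal{H}}$ and cancels a factor of $\|\partial_0^*A\|_2$, both routes resting on the same identity $\|\partial_0 x\|_{\mathcal{H}} = \|\Delta_u x\|_2 \leq \|x\|_2$. (Like the paper's own proof, you arrive at $\|f\|_\infty\|h\|_\infty\|\partial^*\partial g\|_2$, so the $\|g\|_\infty$ in the displayed statement is evidently a typo for $\|h\|_\infty$.)
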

  
  \begin{proof}
  
   Notice that (3.6) guarantees that it is enough to prove the inequality
    $$ \|\partial_0^* (f (\partial_0 g) h)\|_{2} \leq \|f\|_\infty \|g\|_\infty \|\partial_0^*\partial_0 g \|_{2}.$$
   Since $\Delta_u$ is an orthogonal projection in $\ell^2_n,$ for any $x \in \mathbb{R}^n,$
     $$  \|\Delta_u x\|_2 = \langle -\Delta_u x,x \rangle ^{1/2} = \|\partial_0 x\|_{\mathcal{H}},$$
    where we used formula (3.4).
  
  We observe that  $\|\partial_0^* A\|_{2} \leq \|A\|_\mathcal{H}$ for any $A \in M_n(\mathbb{R}).$ Indeed, for any $x \in \mathbb{R}^n,$
   the Cauchy--Schwarz inequality and the previous equality imply $$\langle \partial_0^* A , \partial_0^* A \rangle = (  A , \partial_0 \partial_0^* A )_\mathcal{H} 
   \leq  \| A\|_\mathcal{H}  \| \partial_0 \partial_0^* A \|_\mathcal{H} = \| A\|_\mathcal{H} \| \Delta_u \partial_0^* A \|_2 \leq \| A\|_\mathcal{H} \|\partial_0^* A \|_2 . $$
   Furthermore, \cite[Theorem 3.7]{CS} gives the inequality
   \begin{equation*}
    \|f (\partial_0 g) h\|_\mathcal{H} \leq \|f\|_\infty \|h\|_\infty \|\partial_0 g\|_\mathcal{H}   
   \end{equation*}
   for any $f, g$ and $h \in \ell^\infty_n.$
  
    Combining these observations, we get 
          \begin{eqnarray*}
       \begin{split}
        \|\partial_0^* (f (\partial_0 g) h)\|_{2}&\leq \|f (\partial_0 g) h\|_{\mathcal{H}} \\
                                           &\leq \|f\|_\infty \|h\|_\infty \|\partial_0 g\|_\mathcal{H}    \\                                           
                                           &=  \|f\|_\infty \|h\|_\infty \|\partial_0^* \partial_0 g\|_{2} ,
       \end{split}                                           
      \end{eqnarray*}   
      so the proof is complete.
       \end{proof}

  \subsubsection{Rieffel's non-commutative Riemann metric} In \cite{R1} Rieffel introduced the concept of the non-commutative
  Riemann metric that turns out to be a rich source of strongly Leibniz seminorms in finite dimensional $C^*$-algebras. 
  They provide us with an alternative way to obtain the Laplacian $\Delta_u$ as the divergence of a derivation.
  In short, a non-commutative Riemann
  metric is a normed first order differential calculus $(\Omega, \partial)$ over a $C^*$-algebra $\mathcal{A}$ with an $\mathcal{A}$-valued correspondence 
  $(\cdot, \cdot)_\mathcal{A}$ defined on $\Omega,$ see \cite[Section 3]{R1}. In particular, let us define the $\ell^\infty_n$-valued pre-inner product
  on the algebraic tensor product $\ell^\infty_n \otimes \ell^\infty_n$ by
   $$ (c \otimes d, a \otimes b)_{\ell^\infty_n} = bd \Gamma_{\Delta_u} (a,c),$$
  where $\Gamma_{\Delta_u}$ is the carr\'e-du-champ operator
   $$ \Gamma_{\Delta_u} (a,c) = a \Delta_u c  +  c \Delta_u a - \Delta_u(ac).$$
  Then $$ {1 \over n} \mathscr{E}_{\Delta_u}(f,g) = \mathbb{E} (\Gamma_{\Delta_u}(f,g) ) =  \mbox{Cov}_u(f,g), $$ where $\mathbb{E}$ and Cov$_u$ denotes the expected value 
  and the covariance with respect to the uniform probability measure.
  A positive bilinear form on $\ell^\infty_n \otimes \ell^\infty_n$ is given by
   $$ (c \otimes d, a \otimes b)_\mathcal{H} = \mathbb{E} (bd \Gamma_{\Delta_u} (a,c)). $$ It is simple to see that 
    \begin{eqnarray*}
        \begin{split}
          (c \otimes d, a \otimes b)_\mathcal{H} &= \mathbb{E}(bcd \Delta_u a  +  abd \Delta_u c - bd \Delta_u(ac)) \\
                                                 &= {1 \over 2n} (\mathscr{E}_{\Delta_u} (c, abd) + \mathscr{E}_{\Delta_u} (bcd, a)- \mathscr{E}_{\Delta_u}(bd, ac) ).                                                 
        \end{split}
    \end{eqnarray*}
  which essentially agrees with the bilinear form used by Cipriani and Sauvageot and leads to (3.5). 

   \section{An example} 
    
    The failure of the bimodule inequality (3.3) in the previous section suggests the following finite dimensional 
    example of a Leibniz seminorm that is not strongly Leibniz. 
    Such an example seems to have been unnoticed so far (see \cite[p. 54]{R2}).
    
    Let us define the Laplace matrix
      $$ \Delta_3 = \begin{bmatrix}
                        -2 & 1 & 1 \\
                         1 & -1 & 0 \\
                         1 & 0 & -1 
                    \end{bmatrix}.
    $$
    
           Then the  seminorm $$L(f) = \|\Delta_3f\|_\infty $$ defined on $\mathbb{R}^3$ is a Leibniz seminorm that is not strongly Leibniz.
    
         In fact, let us choose the vector $f =  (-0.1, 0.1, -0.2)^T.$ A direct calculation gives that the inequality $L(1/f) \leq \|1/f\|_\infty^2 L(f)$ does not hold. 
         For the Leibniz rule, let us consider the decomposition 
          $$ \Delta_3( fg) = \Pi(f)g + \Pi(g)f,$$ where
         $$ \Pi(x) = {1 \over 2} \begin{bmatrix}
                        -(2x_1 + x_2 + x_3) & x_1 + x_2  & x_1 + x_3 \\
                         x_1 + x_2 & -(x_1 + x_2) & 0 \\
                         x_1 + x_3 & 0 & -(x_1 + x_3) 
                    \end{bmatrix}. $$ 
         Then 
         $$ \|\Pi(f)g\|_\infty \leq \|f\|_\infty  \|\Delta_3 g\|_\infty \quad \mbox{and} \quad \|\Pi(g)f\|_\infty \leq \|g\|_\infty  \|\Delta_3 f\|_\infty.$$
         Indeed, without loss of generality, we can assume that $\|f\|_\infty = 1.$ The function $f \mapsto \|\Pi(f)g\|_\infty$ is convex 
         on the cube $[-1,1]^3,$ hence it attains its maximum if $f$ is in the vertex set $\{-1,1\}^3.$ In addition, 
          $$ \|\Pi(f)g\|_\infty =  \max ( |\varepsilon_{12}(g_1 - g_2) + \varepsilon_{13}(g_1 - g_3)|, |\varepsilon_{12}(g_1-g_2)|, |\varepsilon_{13}(g_1 - g_3)| ),$$
         where $\displaystyle \varepsilon_{ij} = {f_i + f_j \over 2} \in \{-1,0,1\}.$ Then it is straightforward to see that 
         $$ \|\Pi(f)g\|_\infty \leq \|\Delta_3 g\|_\infty =  \max ( |g_1 - g_2 + g_1 - g_3|, |g_1-g_2|, |g_1 - g_3| ).$$
         We can derive similarly the rest of the statement, which gives the requested result. 
         
         It would be interesting to know if the Leibniz inequality 
          $$ \|\Delta(fg)\| \leq \|f\|_\infty \|\Delta g\| + \|g\|_\infty \|\Delta f\|$$
         holds for every $n \times n$ Laplacian $\Delta.$ This would be a particular discrete version of the Kato--Ponce inequality studied intensively
         in PDEs, see e.g \cite{KP}, \cite{GO} and \cite{BL}

       \section{An application : the continuous case}

 In probability theory and statistics central moments and absolute central moments
 are primary objects which usually appear in estimates of probability distribution and their characteristic functions.        
 M. Rieffel proved that the standard deviation is a strongly Leibniz seminorm in commutative 
 and non-commutative probability spaces. He even extended
 these results to the case of matricial seminorms  on a unital $C^*$-algebra \cite{R2}.
 
 We are now in a position to prove the Leibniz inequality for higher order absolute moments of bounded real-valued random variables.
 
%
 Here is one of the main results of the paper.      

 \begin{thm}
    Let $(S, \mathcal{F}, \mu)$ be a probability space and $1\leq p < \infty.$  For any real $f$ and $g \in L^\infty(S, \mu),$ we have
 $$ \|fg-\mathbb{E}(fg)\|_p \leq \|g\|_{\infty}\|f-\mathbb{E}f\|_p + \|f\|_{\infty}\|g-\mathbb{E}g\|_p .$$
 \end{thm}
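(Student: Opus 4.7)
The plan is to reduce the general statement to the finite, uniform case already handled by Theorem 2.6 through a chain of standard approximations: bounded measurable $\to$ simple $\to$ finite atomic $\to$ rational weights $\to$ uniform weights on a larger set.

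First I would reduce to simple functions. Choose sequences of simple functions $f_n, g_n$ with $|f_n| \leq \|f\|_\infty$, $|g_n| \leq \|g\|_\infty$, and $f_n \to f$, $g_n \to g$ pointwise $\mu$-a.e.\ (standard approximation of bounded measurable functions). By dominated convergence, $f_n g_n \to fg$ in $L^p$, and $\mathbb{E}(f_n g_n) \to \mathbb{E}(fg)$, $\mathbb{E}f_n \to \mathbb{E}f$, $\mathbb{E}g_n \to \mathbb{E}g$; consequently both sides of the claimed inequality converge for the pair $(f_n, g_n)$ to the corresponding expressions for $(f, g)$. Hence it suffices to prove the statement for simple $f, g$.

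Next, for simple $f$ and $g$, choose a common refining partition $S = A_1 \sqcup \cdots \sqcup A_m$ on which both $f$ and $g$ are constant. Setting $p_i = \mu(A_i) > 0$ (discarding null atoms) and identifying $f, g$ with vectors $(f_i), (g_i) \in \mathbb{R}^m$, the problem becomes one on the finite probability space $(\{1,\ldots,m\}, (p_i))$. Approximate the weight vector $(p_1, \ldots, p_m)$ in the simplex by a sequence of rational weight vectors $p_i^{(\ell)} = k_i^{(\ell)}/N_\ell$, $\sum_i k_i^{(\ell)} = N_\ell$, with $p_i^{(\ell)} \to p_i$ as $\ell \to \infty$. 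For fixed bounded vectors $f, g$, the quantities $\mathbb{E}_q(\cdot)$ and $\|\cdot - \mathbb{E}_q(\cdot)\|_{p,q}$ depend continuously on the weight vector $q$, so it suffices to establish the inequality for rational weights.

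Finally, given rational weights $p_i = k_i/N$, replace each index $i$ by $k_i$ uniform copies, yielding a uniform probability space on $N$ points. Let $\tilde f, \tilde g \in \mathbb{R}^N$ be the corresponding lifted vectors (so $f_i$ is repeated $k_i$ times, and likewise for $g$). Then
$$\|h\|_{p,\mu}^p = \sum_{i=1}^m p_i |h_i|^p = \frac{1}{N}\sum_{j=1}^N |\tilde h_j|^p, \qquad \mathbb{E}_\mu h = \frac{1}{N}\sum_{j=1}^N \tilde h_j,$$
and $\widetilde{fg} = \tilde f \tilde g$, so each term of the inequality for $(f, g)$ on the atomic space equals the corresponding term for $(\tilde f, \tilde g)$ on the uniform $N$-point space. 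Since $\|\cdot\|_p$ is a symmetric norm on $\mathbb{R}^N$, Theorem 2.6 applied to $(\tilde f, \tilde g)$ gives exactly the required inequality.

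The only genuine technical step is the continuity argument in passing from general $(p_i)$ to rational $(p_i^{(\ell)})$ and the dominated convergence in passing from bounded measurable to simple, but both are routine given the uniform $L^\infty$ bounds on $f$ and $g$. The conceptual content of the theorem is entirely carried by Theorem 2.6; the present argument is a discretization/replication scheme that encodes an arbitrary probability measure as a uniform one on a large enough (possibly infinite-limit) set.
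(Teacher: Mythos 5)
Your proposal is correct and follows essentially the same route as the paper: the entire content is the reduction of the general probability space to the finite uniform case, where Theorem 2.6 applies. The only difference is that the paper outsources this reduction to an equivalence established in the earlier work of Besenyei and L\'eka \cite{BeL}, whereas you carry it out explicitly (simple-function approximation, rational weights, replication to a uniform $N$-point space), and each of your approximation steps is routine and valid given the uniform $L^\infty$ bounds.
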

 
 \begin{proof}
  The statement is a corollary of Theorem 2.6 and the equivalence of Proposition 2.1 proved in \cite{BeL}.
 \end{proof}

 It would be interesting to have similar estimates in rearrangement invariant Banach function spaces. 

 \section*{Funding}

This work was supported by the Marie Curie Individual Fellowship, Project 'Moments' [653943]; and by the Hungarian Scientific Research Fund [K104206].
 
 \section*{Acknowledgement}
	
	The author wishes to thank Professor Koenraad Audenaert for stimulating and useful discussions.


\begin{thebibliography}{99}
     \bibitem{BeL} Á. Besenyei and Z. Léka, Leibniz seminorms in probability spaces, {\it J. Math. Anal. Appl.}, {\bf 429} (2015), 1178--1189.         
     \bibitem{B} R. Bhatia, {\it Matrix analysis}, Springer--Verlag New York, 1997. 
     \bibitem{BH} N. Bouleau and F. Hirsch, {\it Dirichlet Forms and Analysis on Wiener space}, de Gruyter Studies in Mathematics, De Gruyter, Berlin, 1991. 
     \bibitem{BL} J. Bourgain and D. Li, On an endpoint Kato--Ponce inequality, {\it Differential Integral Equations}, {\bf  27} (2014), 1037--1072.      
     \bibitem{C} A.-P.  Calderón, Spaces between $L^1$ and $L^\infty$ and the theorem of Marcinkiewicz, {\it Stud. Math.}, {\bf  26} (1966), 273--299.      
     \bibitem{AC} A. Connes, Compact metric spaces, Fredholm modules and hyperfiniteness, {\it Ergodic Theory and Dynamical Systems}, {\bf  9} (1989), 207--220.
     \bibitem{CS} F. Cipriani and J.-L. Sauvageot, Derivations as square roots of Dirichlet forms, {\it J. Func. Anal.}, {\bf 201} (2003), 78--120.
     \bibitem{Fuk} M. Fukushima, {\it Dirichlet Forms and Markov Processes}, North Holland Mathematical Library, 1980.
     \bibitem{GO} L. Grafakos and S. Oh, The Kato--Ponce inequality, {\it Comm. Partial Differential Equations}, {\bf 39} (2014), 1128--1157.     
     \bibitem{KP} T. Kato and G. Ponce, Commutator estimates and the Euler and Navier-Stokes equations, {\it Comm. Pure Appl. Math.}, {\bf 41} (1988), 891--907.
     \bibitem{KL} D. Kerr and H. Li, On Gromov-Hausdorff convergence for operator metric spaces, {\it J. Operator Theory}, {\bf 62} (2009), 83--109. 
     \bibitem{La} F. Latrémoli\`{e}re, Quantum metric spaces and the Gromov--Hausdorff propinquity, preprint, http://arxiv.org/pdf/1506.04341. 
     \bibitem{Y} R. Lyons and Y. Peres, {\it Probability on trees and networks}, preprint.
     \bibitem{K} J. Kigami, {\it  Analysis on Fractals}, Cambridge University Press, Cambridge, 2001. 
     \bibitem{Mi} B.S. Mityagin, An interpolation theorem for modular spaces (Russian), {\it Mat. Sb. (N.S.)}, {\bf  66} (1965), 473--482.          
     \bibitem{Pe} G.K. Pedersen, {\it  Analysis Now}, Springer--Verlag, 1989.      
     \bibitem{R0} M.A. Rieffel, Metrics on state spaces, {\it Doc. Math.}, {\bf 4} (1999), 559--600.
     \bibitem{R0c} M.A. Rieffel, Matrix algebras converge to the sphere for quantum Gromov--Hausdorff distance, Memoirs Amer. Math. Soc, 2004.
     \bibitem{RL} M.A. Rieffel, Leibniz seminorms for ``matrix algebras converge to the sphere``, {\it Quanta of Maths} {\bf 11}, Amer. Math. Soc., Providence, RI, 2010,  543--578.
     \bibitem{R1} M.A. Rieffel, Non-commutative resistance networks, {\it SIGMA Symmetry Integrability Geom. Methods Appl.},{ \bf 10} (2014), 2259--2274.     
     \bibitem{R2} M.A. Rieffel, Standard deviation is a strongly Leibniz seminorm, {\it New York J. Math.}, {\bf 20} (2014), 35--56.
     \bibitem{JLS} J.--L. Sauvageot, Quantum Dirichlet forms, differential calculus and semigroups, 
              Quantum probability and applications, V (Heidelberg, 1988), 334–346, Lecture Notes in Math., 1442, Springer, Berlin, 1990. 
     \bibitem{BS} B. Simon, {\it  Convexity: An Analytic Viewpoint}, Cambridge University Press, 2011.      
     \bibitem{NW} N. Weaver, Lipschitz algebras and derivations of von Neumann algebras, {\it J. Funct. Anal.}, 139 (1996), 261--300.
     \bibitem{NW2} N. Weaver, {\it Lipschitz algebras}, World Scientific Publishing Co., Inc., River Edge, NJ, 1999.  
     \bibitem{Wu} W. Wu, Quantized Gromov-Hausdorff distance, {\it J. Funct. Anal.}, 238 (2006), 58--98.  
   \end{thebibliography}
\end{document}